\numberwithin{equation}{section}
\newtheorem{theorem}{Theorem}[section]
\newtheorem{lemma}[theorem]{Lemma}
\newtheorem{corollary}[theorem]{Corollary}
\newtheorem{example}[theorem]{Example} 
\newcommand{\norm}{\left\Vert\,\cdot\,\right\Vert}
\newcommand{\C}{\mathbb C}
\newcommand{\D}{\mathbb D}
\newcommand{\M}{\mathbb M}
\newcommand{\T}{\mathbb T}
\newcommand{\R}{\mathbb R}
\newcommand{\N}{\mathbb N}
\newcommand{\Z}{\mathbb Z}
\newcommand{\I}{\mathbb I}
\newcommand{\B}{\mathcal B}
\newcommand{\lv}{\left\vert}
\newcommand{\rv}{\right\vert}
\newcommand{\lV}{\left\Vert}
\newcommand{\rV}{\right\Vert}
\newcommand{\s}{\smallskip}
\newcommand{\codim}{{\rm codim}\,}
\def \CBA{commutative Banach algebra}
\def\CG{countably generated}
\def\obox {\hbox{\vrule\vbox{\hrule
 \hbox spread 4pt{\hfil\vbox spread 6pt{\vfil}\hfil}
\hrule}\vrule}}
\def\qed{{\unskip\nobreak\hfil\penalty50\hskip1em\nobreak\hfil 
\obox
\parfillskip=0pt\finalhyphendemerits=0\par}\vskip3mm}
\title
{Generators of maximal left ideals in  Banach algebras} 
 \author{H.\ G.\ Dales, and W.\ \.Zelazko}
\begin{document}
\maketitle
 
 \begin{abstract}{In 1971, Grauert and Remmert proved that a commutative, complex, Noetherian Banach algebra is necessarily finite-dimensional. 
More precisely, they proved that a commutative, complex   Banach algebra has finite dimension over $\C$ whenever all the closed 
 ideals in the algebra are (algebraically) finitely generated. In 1974, Sinclair and Tullo obtained a non-commutative version of this result. 
In 1978, Ferreira and Tomassini improved the result of Grauert and Remmert  by showing that the statement is also true if one replaces `closed ideals' 
by `maximal ideals in the {\v S}ilov boundary of $A$'.  
We shall give a shorter proof of this latter result, together with some extensions and related examples.}

(2010) Subject classification: Primary 46H10;  Secondary 46J10
  \end{abstract}


\section{Introduction} 

\subsection{Notation} The natural numbers and the integers are $\N$ and $\Z$, respectively. For $n \in \N$,
 we set $\N_n=\{1,\dots,n\}$.  The unit circle and open unit disc in the complex field $\C$ are $\T$ and $\D$, respectively; the real line is  $\R$, 
and $\R^+= \{s \in\R : s\geq 0\}$. The algebra of all $n\times n$ matrices over $\C$ is denoted by $\M_{\,n}$; the matrix units in the matrix algebra 
${\mathbb M}_{\,n}$  are denoted by $E_{i,j}$ for $i,j\in \N_n$; the identity matrix is denoted by $\iota_n$.
 
 We write $c_{\,0}$ and $\ell^{\,p}$ (for $p\in [1,\infty])$ for the standard sequence spaces on $\N$; we write $c =c_{\,0}\oplus \C 1$ for the Banach 
space of all convergent sequences, where $1 =(1,1,1,\dots)$. 

Let $A$ be an (associative) algebra, always taken to be over the complex field.    
In the case where $A$ does not have an identity, the algebra formed by adjoining an identity to $A$ is denoted by $A^{\sharp}$; 
we take $A^{\sharp}$ to be $A$ in the case where $A$ already has an identity.   A linear subspace $I$ of $A$ is a {\it left ideal\/} if $AI\subset I$,
 a {\it right  ideal\/} if $IA\subset I$, 
 and an {\it ideal\/} if $AI+IA \subset I$.  The {\it Jacobson radical\/} of an algebra $A$ is denoted by $J(A)$; 
$J(A)$ is the intersection of the  maximal modular left (or right) ideals of $A$, and it is an ideal in $A$.

Let  $S$  be a subset of $A$. Then the {\it left ideal generated by $S$\/} is the intersection of the left ideals of $A$ that contain $S$; this left ideal 
is denoted by $\langle S \rangle$. Clearly,   
$$ \langle S \rangle =\left\{\sum_{i=1}^n a_is_i : a_1,\dots,a_n \in  A^{\sharp}, \,s_1,\dots,s_n \in S,\,n\in\N\right\}\,.
$$
 The left ideal generated by a finite subset $\{a_1,\dots,a_n\}$ is equal to
\begin{equation}\label{(1.1)}
I= A^{\sharp}a_1 +\cdots+A^{\sharp}a_n \,,
\end{equation}
and it is denoted by $\langle a_1,\dots,a_n\rangle$. 

Let  $I$ be a left ideal in the algebra $A$. Then:   $I$ is {\it countably generated\/}  if there is a countable set $S$
 with $I = \langle S \rangle$; $I$ 
is {\it finitely generated\/} if there are elements $a_1,\dots,a_n$ in $A$ 
with  $I=\langle a_1,\dots,a_n\rangle$, and  in this case,  $a_1,\dots,a_n$  are {\it generators\/} of $I$;  $I$ is {\it singly generated\/} (by $a$) if 
$I= \langle a\rangle$ for some $a\in A$.  
 
An algebra $A$ is {\it left Noetherian\/} if the family of left ideals in $A$ satisfies the ascending chain condition; 
this is the case if and only if each left ideal in $A$ is  finitely generated.  See \cite[Chapter VIII]{Hun}, for example.\medskip

\subsection{The families  ${\frak I}_\infty (A)$ and ${\frak U}_\infty (A)$} Now suppose that $A$ is an algebra. 
We  denote by ${\frak I}_\infty (A)$ 
the family of all    left ideals in $A$ which are not finitely generated, and by ${\frak U}_\infty (A)$ the family of all left ideals in $A$ which are not {\CG}, so that
  ${\frak U}_\infty (A)\subset {\frak I}_\infty (A)$.

The  result of Grauert and Remmert which was stated  in the abstract 
 can be formulated as follows. See  the Appendix to $\S5$ in \cite{GRe}. \vspace{-\baselineskip}\s

\begin{theorem} \label{1.1}
 Let $A$ be a  commutative Banach algebra. Suppose that every closed ideal in $A$ is finitely generated. 
Then $A$ is finite dimensional.\qed
\end{theorem}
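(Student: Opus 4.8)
The plan is to reduce the problem to a statement about the maximal ideal space and then exploit local analytic structure. First I would assume, for contradiction, that $A$ is infinite-dimensional. Replacing $A$ by $A^{\sharp}$ if necessary, I may assume $A$ is unital. Since every closed ideal is finitely generated, in particular every maximal ideal $M$ of $A$ is finitely generated, say $M = \langle a_1, \dots, a_n \rangle$, and the same holds after passing to the quotient by powers of $M$. The key point is that finite generation of all closed ideals forces $A$ to be Noetherian in a strong sense: one shows that every closed ideal $I$, and then using that quotients $A/I$ again have the property, that the $M$-adic filtration $M \supset M^2 \supset \cdots$ behaves like that of a Noetherian local ring. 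In particular $\bigcap_k M^k$ is a closed ideal, hence finitely generated, and a Nakayama-type argument (valid because $M^k/M^{k+1}$ is finite-dimensional — this needs justification) shows $\bigcap_k M^k = (0)$, so $A_M := \varprojlim A/M^k$ is a Noetherian local ring of some finite Krull dimension $d = d(M)$.

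Next I would pass to geometry. The character space $\Phi_A$ is compact, and the finite-generation hypothesis should force it to be finite: the heart of the argument is to show that if $M$ is a non-isolated character then one can build a closed ideal requiring infinitely many generators, contradicting the hypothesis. Concretely, around a character $\varphi_0$ one uses the finitely many generators $a_1, \dots, a_n$ of $M_{\varphi_0}$ to produce a finite map (locally) from a neighbourhood of $\varphi_0$ in $\Phi_A$ into $\C^n$; combined with the analytic functional calculus and the fact that $A/M^2$ is finite-dimensional, one gets that a neighbourhood of $\varphi_0$ carries the structure of (a subset of) an analytic variety of dimension $d(\varphi_0)$. If any $d(\varphi_0) \ge 1$, the variety contains a disc, and then the ideal of functions vanishing on that disc is closed but not finitely generated — this is the analogue of the fact that $H^\infty(\D)$ or the disc algebra has non-finitely-generated closed ideals. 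Hence $d(\varphi_0) = 0$ for every character, so each character is isolated, so $\Phi_A$ is finite.

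Once $\Phi_A = \{\varphi_1, \dots, \varphi_m\}$ is finite, $A$ decomposes (using idempotents supplied by the analytic functional calculus, since the characters are isolated) as a finite direct product $A = A_1 \oplus \cdots \oplus A_m$, where each $A_i$ is a commutative Banach algebra with a single character; thus each $A_i$ is local with $M_i = J(A_i)$. It remains to show a commutative unital Banach algebra $B$ with $B/J(B) = \C$ and all closed ideals finitely generated is finite-dimensional: here $J(B)$ is finitely generated, say by $b_1, \dots, b_k$, so $J(B)/J(B)^2$ is finite-dimensional, and then by the topological Nakayama lemma $J(B)^j/J(B)^{j+1}$ is finite-dimensional for every $j$ and $J(B)^j$ is generated by the degree-$j$ monomials in $b_1, \dots, b_k$; since $\bigcap_j J(B)^j$ is closed hence finitely generated, one more Nakayama argument gives $\bigcap_j J(B)^j = (0)$, and finally a spectral-radius/quasinilpotence estimate shows that $J(B)$, being finitely generated and equal to its own closure with trivial infinite intersection of powers, must actually be nilpotent — whence $B = \C 1 \oplus J(B)$ is finite-dimensional.

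The main obstacle, and the step I expect to require the most care, is the passage from ``$M_{\varphi_0}$ is finitely generated'' to ``a neighbourhood of $\varphi_0$ has the structure of a finite-dimensional analytic variety, and positive dimension yields a non-finitely-generated closed ideal.'' Making this rigorous is exactly where the Grauert--Remmert machinery (Oka-type coherence, finite mapping theorems) enters, and it is the reason the theorem is quoted rather than proved afresh in this paper; the cleaner route here is to cite \cite{GRe} directly for Theorem~\ref{1.1} and reserve the self-contained arguments for the sharper Ferreira--Tomassini-type statements that follow. The other delicate point is the repeated use of a \emph{topological} Nakayama lemma: one must check that the relevant quotients are complete and that the ideals in question are closed, so that finite generation modulo a closed ideal lifts to genuine finite generation.
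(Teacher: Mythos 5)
The paper does not actually prove Theorem \ref{1.1}: it is quoted from \cite{GRe} (compare Theorem \ref{1.2} from \cite{ST}), so your sketch has to be judged against the known arguments, and it has two genuine gaps, both located exactly where the analysis has to do real work. The first is your central geometric step: from a non-isolated character $\varphi_0$ with $M_{\varphi_0}$ finitely generated you propose to obtain a closed, non-finitely-generated ideal as ``the ideal of functions vanishing on a disc'' inside the local analytic variety supplied by Gleason's theorem. That ideal can perfectly well be $\{0\}$, which is finitely generated: in the disc algebra $A(\overline{\D})$ the maximal ideal at the origin is singly generated by the coordinate function, the local variety at $0$ is one-dimensional, and a function vanishing on a subdisc of $\D$ vanishes identically by the identity theorem. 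The same example shows that finitely generated maximal ideals do occur at non-isolated characters, so isolation cannot be forced character-by-character from the maximal ideals alone; it can only be forced for characters in the {\v S}ilov boundary, via the maximum principle, which is exactly how Theorem \ref{3.3} (following \cite{FT}) proceeds. For characters off the boundary your sketch offers no substitute construction of a closed ideal that is not finitely generated, so the claim ``every character is isolated, hence $\Phi_A$ is finite'' is not established by the argument you outline.

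The second gap is the end of the local case. Having arranged that $B$ is local with $J(B)$ finitely generated and closed, $J(B)/J(B)^2$ finite-dimensional, and $\bigcap_k J(B)^k=\{0\}$, you invoke ``a spectral-radius/quasinilpotence estimate'' to conclude that $J(B)$ is nilpotent. No such estimate is given, and none can follow from the listed facts alone: the Noetherian local ring of formal power series in one variable satisfies every one of them, so the real task is to show that such a ring admits no Banach-algebra norm, and that is precisely the hard analytic content of \cite{GRe} and \cite{ST} which is missing from the sketch. (A minor simplification you could have used: by Theorem \ref{1.3}, the hypothesis already forces every ideal to be closed and finitely generated, so the worries about closures and a ``topological Nakayama lemma'' evaporate.) Since both crucial steps are absent, your own closing suggestion is the right course, and it is what the paper does: Theorem \ref{1.1} is cited from \cite{GRe}, and the self-contained arguments are reserved for the {\v S}ilov-boundary results of Section 3.
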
\s
 
 This theorem was  generalized by Sinclair and Tullo \cite{ST} to the non-commutative case; we state their result as follows.\s
 
 \begin{theorem} \label{1.2}
Let $A$ be a Banach algebra. Suppose that every closed left ideal in $A$ is finitely generated. 
Then $A$ is finite dimensional.\qed
\end{theorem}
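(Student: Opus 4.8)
The plan is to reduce the statement to a question about the Jacobson radical, and then to extract a contradiction from the hypothesis by means of the open mapping theorem.

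First I would pass to the unital case. If $A$ has no identity, then $A^{\sharp}$ still has all its closed left ideals finitely generated: a closed left ideal $I$ of $A^{\sharp}$ meets $A$ in a closed left ideal of $A$, which is finitely generated, and $I/(I\cap A)$ embeds in $A^{\sharp}/A\cong\C$, so at most one further generator is needed; and $A$ is finite dimensional as soon as $A^{\sharp}$ is. So assume $A$ unital. The key technical device is that ``finitely generated'' upgrades to ``boundedly generated'': for a closed left ideal $I=Aa_{1}+\cdots+Aa_{n}$, the bounded surjection $A^{n}\to I$, $(x_{i})\mapsto\sum x_{i}a_{i}$, is open, so there is a constant $C$ such that every $b\in I$ has the form $\sum x_{i}a_{i}$ with $\|x_{i}\|\le C\|b\|$. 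From this I would deduce the maximal condition for closed left ideals: given an increasing chain $(I_{m})$, put $I=\overline{\bigcup_{m}I_{m}}$, a closed left ideal, so $I=Aa_{1}+\cdots+Aa_{k}$ with some bounded‑generation constant $C$; approximate each $a_{i}$ to within $1/(2Ck)$ by some $b_{i}$ lying in a single $I_{N}$; then $(x_{i})\mapsto\sum x_{i}b_{i}$ is a small enough perturbation of the open surjection $(x_{i})\mapsto\sum x_{i}a_{i}$ to remain surjective onto $I$ (Neumann series), so $I=Ab_{1}+\cdots+Ab_{k}\subseteq I_{N}$ and the chain stabilises.

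Next I would make the structural reduction. If $A$ is infinite dimensional, then among the closed two‑sided ideals $P$ with $\dim(A/P)=\infty$ (a non‑empty family, as it contains $\{0\}$) the maximal condition gives a maximal one; replacing $A$ by $A/P$, which still satisfies the hypothesis (a closed left ideal of a quotient pulls back to one of $A$), I may assume in addition that every \emph{proper} closed two‑sided ideal of $A$ has finite codimension. In this situation $A$ has only finitely many primitive ideals: otherwise, for an infinite list $P_{1},P_{2},\dots$ of them, $\bigcap_{j>k}P_{j}$ is a proper — hence finite‑codimensional — closed ideal over which infinitely many distinct primitive ideals lie, which is impossible for a finite‑dimensional algebra. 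Since each primitive ideal is proper and so of finite codimension, $A/J(A)$ embeds in the finite direct sum $\bigoplus_{i}A/P_{i}$ of finite‑dimensional algebras, whence $\dim A/J(A)<\infty$; as $A$ is infinite dimensional, $R:=J(A)$ is therefore an infinite‑dimensional, finitely generated closed left ideal of finite codimension in $A$, and everything now turns on showing that this is impossible.

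So I would analyse the powers of $R$. Writing $R=Aa_{1}+\cdots+Aa_{n}$ with the $a_{i}\in R$, one checks by induction (using $RA\subseteq R$) that $R^{k}$ is the left ideal generated by the $n^{k}$ words of length $k$ in the $a_{i}$; using a fixed finite‑dimensional linear complement of $R$ in $A$ (finite dimensional because $A/R$ is) one then gets $R^{k-1}=R^{k}+W_{k-1}$ with $W_{k-1}$ finite dimensional. Hence $R^{k}$ has finite codimension in $A$; since a bounded operator with finite‑codimensional range automatically has closed range, each $R^{k}$ is closed, and the successive quotients $R^{k}/R^{k+1}$ are finite dimensional. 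Now if the descending chain $R\supseteq R^{2}\supseteq R^{3}\supseteq\cdots$ ever stabilises, say $R^{k}=R^{k+1}=R\cdot R^{k}$, then Nakayama's lemma — applicable since $R^{k}$ is a finitely generated $A$‑module and $R\subseteq J(A)$ — forces $R^{k}=0$; thus $R$ is nilpotent and $\dim R=\sum_{k}\dim(R^{k}/R^{k+1})<\infty$, contradicting $\dim R=\infty$.

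The main obstacle is exactly that last point: ruling out the possibility that $R\supsetneq R^{2}\supsetneq R^{3}\supsetneq\cdots$ strictly for ever — equivalently, showing that an infinite‑dimensional radical Banach algebra which is finitely generated as a left ideal in a unital Banach algebra in which it has finite codimension cannot exist. Everything up to this point is soft; this step must be genuinely quantitative, turning the open‑mapping estimates on the (quasinilpotent) generators $a_{i}$ of $R$ into a contradiction. It is the analogue for the radical of the fact that the ideal $\K(H)$ of compact operators on an infinite‑dimensional Hilbert space is not finitely generated as a left ideal of $\B(H)$ — which one sees by manufacturing, from any finite family of compact operators, a compact operator whose singular values decay too slowly to be dominated by them — and the difficulty is to carry out such an estimate with only the algebra norm and the bounded‑generation constants at one's disposal, rather than singular values.
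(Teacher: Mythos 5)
Your opening moves are fine, and your stabilisation argument (bounded generation from the open mapping theorem plus a Neumann-series perturbation) is exactly the mechanism behind Sinclair--Tullo's Lemma~1, which the paper sketches as Theorem~\ref{1.3}; the paper itself does not reprove Theorem~\ref{1.2} but quotes it from \cite{ST}. However, your proposal does not prove the theorem, and the gap you acknowledge at the end is not a loose end to be tidied up: it is the entire content of the result. After your reductions you must exclude a unital Banach algebra $A$ in which $R=J(A)$ is infinite dimensional, of finite codimension, finitely generated as a left ideal, and has $R^{k}\supsetneq R^{k+1}$ for every $k$. No purely algebraic argument can do this, because $\C[[x]]$ realises precisely that configuration: it is Noetherian, $R=x\,\C[[x]]$ is singly generated, of codimension $1$, infinite dimensional, and its powers decrease strictly. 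So ACC, finite generation, finite-codimension bookkeeping and Nakayama are all consistent with the situation you need to forbid; only genuinely Banach-algebraic input (in the commutative case the Gleason/analytic-variety argument used in Theorem~\ref{3.3}, in general the arguments of \cite{ST} and \cite{GRe}) rules it out. In effect you have reduced the theorem to a special case of itself, with the hard step still missing.

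There is also a genuine flaw earlier, in the structural reduction. Maximality of $P$ among closed two-sided ideals of infinite codimension yields that every \emph{nonzero} closed two-sided ideal of $A/P$ has finite codimension, not every \emph{proper} one: the zero ideal is proper and has infinite codimension. Your deduction that there are only finitely many primitive ideals, and that each has finite codimension, uses the ``proper $\Rightarrow$ finite codimension'' version, and it collapses exactly when some primitive ideal is zero (i.e.\ $A/P$ is primitive) or when the tails $\bigcap_{j>k}P_{j}$ are all zero (so that $A/P$ is semisimple). In those cases $J(A/P)$ is zero, or $A/P$ modulo its radical is infinite dimensional, and your radical-based attack has nothing to act on. These are not marginal cases: showing that an infinite-dimensional primitive, or semisimple, Banach algebra must contain a closed left ideal that is not finitely generated is a core portion of Sinclair--Tullo's proof, and it would need a separate argument that the proposal does not supply.
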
\s

In the proof of their result, Grauert and Remmert used the following fact \cite[Bemerkung 2, p.~54]{GRe}.
 Let  $I$ be an ideal in a {\CBA} $A$, and suppose that the closure ${\overline I}$ of $I$ is
finitely generated. Then $I$ is already closed,  so that $I={\overline I}$.  A non-commutative version of this result is proved by Sinclair and Tullo
\cite[Lemma 1]{ST}, and this result is stated in \cite[Proposition 2.6.37]{D}, as follows.\s

 \begin{theorem} \label{1.3}
 Let $A$ be a Banach algebra, and let $I$ be a left ideal of $A$ such that ${\overline I}$ is finitely generated. Then $I$ is closed in $A$.\qed
 \end{theorem}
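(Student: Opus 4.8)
The plan is to reduce the statement to a question about the quotient module $\overline{I}/I$ and then run a Baire category / open mapping argument. Write $\overline{I} = A^{\sharp}a_1 + \cdots + A^{\sharp}a_n$ for generators $a_1,\dots,a_n$, and consider the left $A$-module map
\[
\theta \colon (A^{\sharp})^{\,n} \longrightarrow \overline{I}\,, \qquad (b_1,\dots,b_n) \longmapsto b_1a_1 + \cdots + b_na_n\,,
\]
which is a continuous, surjective linear map between Banach spaces (giving $(A^{\sharp})^{\,n}$ the product norm). By the open mapping theorem, $\theta$ is open, so $\overline{I}$ carries the quotient norm coming from $(A^{\sharp})^{\,n}/\ker\theta$, and in particular the image under $\theta$ of the open unit ball of $(A^{\sharp})^{\,n}$ contains a ball of some radius $\delta > 0$ in $\overline{I}$. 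The key point I would aim to exploit is that $I$ itself is a submodule containing $a_1,\dots,a_n$, hence $I \supseteq \langle a_1,\dots,a_n\rangle = \theta((A^{\sharp})^{\,n}) = \overline{I}$ — but wait, that already forces $I = \overline{I}$ directly, since $a_i \in \overline{I}$ need not be in $I$.

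So the genuine content is exactly that subtlety: the generators $a_i$ of $\overline{I}$ need not lie in $I$. The correct approach is the approximation argument. First I would fix the generators $a_1,\dots,a_n$ of $\overline{I}$ and pick $\delta>0$ as above so that $\{x \in \overline{I} : \lV x \rV < \delta\}$ is contained in $\theta(U)$, where $U$ is the open unit ball of $(A^{\sharp})^{\,n}$. Next, since $I$ is dense in $\overline{I}$, choose elements $c_1,\dots,c_n \in I$ with $\lV c_i - a_i \rV$ so small — say each less than $\varepsilon$ for a suitable $\varepsilon$ depending on $\delta$, $n$, and $\max_i \lV a_i\rV$ — that the perturbed module map
\[
\theta' \colon (A^{\sharp})^{\,n} \longrightarrow \overline{I}\,, \qquad (b_1,\dots,b_n) \longmapsto b_1c_1 + \cdots + b_nc_n\,,
\]
is still surjective and still open; concretely, $\lV \theta - \theta' \rV$ is small, so $\theta'$ is a small perturbation of an open surjection and hence remains an open surjection onto $\overline{I}$ (a standard Neumann-series / successive-approximation estimate). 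But the image of $\theta'$ is contained in $A^{\sharp}c_1 + \cdots + A^{\sharp}c_n \subseteq I$ because each $c_i \in I$ and $I$ is a left ideal. Therefore $\overline{I} = \theta'((A^{\sharp})^{\,n}) \subseteq I$, whence $I = \overline{I}$ and $I$ is closed, with $c_1,\dots,c_n$ serving as generators lying in $I$.

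The main obstacle is making the perturbation estimate clean, i.e. verifying that replacing $a_i$ by nearby $c_i$ keeps the map surjective onto all of $\overline{I}$ rather than merely onto a dense or large subspace. The way I would handle it: given $x \in \overline{I}$ with $\lV x \rV < \delta$, successively approximate — find $b^{(1)} \in U$ with $\theta(b^{(1)}) = x$, note $\lV x - \theta'(b^{(1)}) \rV = \lV \sum b^{(1)}_i(a_i - c_i)\rV \le n\varepsilon$, then correct the residual using $\theta$ again on $x - \theta'(b^{(1)})$ (rescaled into the ball), and iterate. Choosing $\varepsilon$ with $n\varepsilon/\delta < 1/2$ makes the residuals shrink geometrically, and the sum of the correction terms converges in $(A^{\sharp})^{\,n}$ to a preimage of $x$ under $\theta'$. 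This shows $\theta'$ is surjective (indeed open), completing the proof. One should double-check the boundary case where $A$ already has an identity versus where $A^{\sharp}$ genuinely adjoins one, but the argument is uniform since we work throughout with $A^{\sharp}$ and formula (1.1).
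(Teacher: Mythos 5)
Your argument is correct and is essentially the paper's own proof (after Sinclair and Tullo): both apply the open mapping theorem to the generator map, replace the generators $a_i$ of $\overline{I}$ by nearby elements of the dense ideal $I$, and absorb the small error by a geometric-series correction. Your successive-approximation scheme is just the unfolded form of the paper's one-step Neumann-series inversion of $\iota-x$ in ${\mathbb M}_{\,n}(A)$, which concludes directly that $a_1,\dots,a_n\in I$, whereas you conclude that the perturbed elements $c_1,\dots,c_n\in I$ generate $\overline{I}$; either way $I=\overline{I}$.
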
\s
 
 In fact, the result is stated for more general algebras than Banach algebras.   Unfortunately, the proof of Proposition 2.6.37 in \cite{D} is not correct. 
Let $A$ be a Banach algebra, and take $n\in\N$. We define ${\mathfrak A} ={\mathbb M}_{\,n}(A)$, the algebra of $n\times n$ matrices over $A$, so that
  ${\mathfrak A}$ is also a Banach algebra with respect to the norm $\norm$, where 
$$
\lV a \rV =\max\{\lV a_{i1}\rV + \cdots + \lV a_{in}\rV  : i=1,\dots, n\}\quad (a=(a_{ij}) \in {\mathfrak A})\,;
$$
in the case where the algebra $A$ is unital, with identity $e_A$, the element $\iota=(\iota_{ij})$, where $\iota_{ij} =\delta_{ij}e_A\,\;(i,j=1,\dots,n)$ 
is the identity of ${\mathfrak A}$.  To prove Theorem \ref{1.3}, we may suppose that $A$ is unital. The proof in \cite{D}  refers to the `determinant' 
of elements in ${\mathfrak A}$; however, the determinant of such elements  is only defined in the special case where $A$ is commutative.
Nevertheless, the proof in \cite[Lemma 1]{ST} is correct; we sketch the details.

Let $I$ be a left ideal in $A$ with ${\overline I} = \langle a_1,\dots,a_n\rangle$, 
where $a_1,\dots,a_n\in \overline{I}$.  Then the open mapping theorem shows 
that there are $b_1, \dots, b_n\in I$  and $x =(x_{ij}) \in {\mathfrak A}$  with $\lV x \rV <1$ such that 
$$
a_i= b_i + x_{i1}a_1+\cdots + x_{in}a_n\quad (i=1,\dots, n)\,,
$$
and so $(b_1,\dots,b_n) = (\iota-x)(a_1, \dots, a_n)$ in $A^n$.   Since $\iota-x$ is invertible in ${\mathfrak A}$, it follows that
 $(a_1, \dots, a_n)= (\iota-x)^{-1}(b_1,\dots,b_n) \in I^n$, giving the result.
 
It follows that every left ideal in a Banach algebra $A$  is finitely generated whenever this is  true for each closed left ideal in $A$.

For various generalizations of versions of Theorem \ref{1.2} to certain topological algebras, see  \cite{CK,CKO,FT,Z1,Z2}. 
 
 The following  generalizations of Theorems \ref{1.2} and \ref{1.3} were  given by Boudi  in \cite[Proposition 1 and Theorem 3]{Boudi}.\s
 
  \begin{theorem} \label{1.3a}
Let $A$ be a Banach algebra, and let $I$ be a left ideal of $A$ such that ${\overline I}$ is countably generated.
 Then $I$ is closed in $A$.\qed
 \end{theorem}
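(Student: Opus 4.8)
The plan is to reduce \emph{Theorem \ref{1.3a}} to \emph{Theorem \ref{1.3}}: I shall show that if the closure $\overline I$ is countably generated as a left ideal, then it is in fact \emph{finitely} generated, and \emph{Theorem \ref{1.3}}, applied to $I$, then gives at once that $I$ is closed.

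As in \emph{Theorem \ref{1.3}} we may assume that $A$ is unital (or simply work with $A^\sharp$ throughout). Then $\overline I$ is a closed left ideal, hence a Banach space in its own right. Write $\overline I = \langle a_1,a_2,\dots\rangle$ with each $a_k\in\overline I$, and put $J_m=\langle a_1,\dots,a_m\rangle = A^\sharp a_1+\cdots+A^\sharp a_m$ for $m\in\N$. Then $J_1\subseteq J_2\subseteq\cdots$ are left ideals contained in $\overline I$, and $\bigcup_{m\in\N}J_m=\langle a_1,a_2,\dots\rangle=\overline I$, since any element of the left ideal generated by a set is a finite combination of members of that set. Moreover each $J_m$ is the range of the bounded linear operator $\psi_m\colon (A^\sharp)^m\to\overline I$ given by $(c_1,\dots,c_m)\mapsto c_1a_1+\cdots+c_ma_m$, a map between Banach spaces.

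Next I would invoke the Baire category theorem. The Banach space $\overline I$ is non-meager in itself, and it is the union of the countable family $\{J_m\}$, so some $J_{m_0}$ must be non-meager in $\overline I$ (a countable union of meager sets being meager). I would then use the standard strengthening of the open mapping theorem: a bounded linear operator between Banach spaces whose range is non-meager is necessarily surjective. Indeed, if the range of such an operator $T\colon X\to Y$ is non-meager, then so is $T(B_X)$, where $B_X$ is the closed unit ball; this set is convex and symmetric, so its closure has non-empty interior and hence contains a ball about $0$, and the usual iteration argument (using completeness of $X$ and $Y$) then shows $T$ to be open onto $Y$. Applying this with $T=\psi_{m_0}$ gives $J_{m_0}=\overline I$, that is, $\overline I=\langle a_1,\dots,a_{m_0}\rangle$ is finitely generated. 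Now \emph{Theorem \ref{1.3}} applies to the left ideal $I$ and yields $I=\overline I$, as required.

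The step that I expect to demand the most care is recognising that one should \emph{not} try to imitate the proof of \emph{Theorem \ref{1.3}} directly. The obvious attempt is to rescale the $a_k$ so that $a=(a_k)$ lies in $\ell^{\,1}(A^\sharp)$, to approximate each $a_k$ by some $b_k\in I$, and to manufacture a contraction $X$ on $\ell^{\,1}(A^\sharp)$ with $(\iota-X)a=b$, whence $a=(\iota-X)^{-1}b=\sum_{k\ge0}X^k b$. The trouble is that the $i$-th coordinate of $\sum_{k\ge0}X^k b$ is an \emph{infinite} series of elements of $I$, so this argument only recovers $a_i\in\overline I$, which we already knew. The Baire-category route above avoids this obstruction because it never tries to express the generators through elements of $I$; it proves the purely algebraic statement that $\overline I$ is finitely generated and then defers to \emph{Theorem \ref{1.3}}.
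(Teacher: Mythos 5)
Your argument is correct, but note that the paper itself contains no proof of Theorem \ref{1.3a} to compare with: the result is quoted from Boudi \cite{Boudi}. What you have produced is an internal derivation of it from Theorem \ref{1.3} (which the paper does prove, via the Sinclair--Tullo matrix-inversion sketch) together with the second-category form of the open mapping theorem: Baire applied to $\overline{I}=\bigcup_m J_m$ inside the Banach space $\overline{I}$ yields a non-meagre $J_{m_0}$, and since $J_{m_0}$ is the range of the bounded operator $\psi_{m_0}\colon (A^{\sharp})^{m_0}\to\overline{I}$, the Banach--Schauder theorem gives $J_{m_0}=\overline{I}$, so $\overline{I}$ is finitely generated and Theorem \ref{1.3} finishes the proof; the steps (range of $\psi_{m_0}$ lies in $\overline{I}$ because $\overline{I}$ is a left ideal, the union of the $J_m$ exhausts $\overline{I}$, non-meagre range forces surjectivity) all check out, the only inessential slip being that completeness of the target is not needed for surjectivity, only completeness of the domain. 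It is worth observing that your intermediate statement --- a closed, countably generated left ideal is finitely generated --- is exactly Corollary \ref{1.3b}, which the paper deduces \emph{from} Theorem \ref{1.3a}: there, Baire only gives that some $\overline{J}_{n_0}$ has interior, hence equals $I$, and Theorem \ref{1.3a} is then invoked to see that $J_{n_0}$ itself is closed. Your proof reverses this logical order, upgrading ``closure of $J_{m_0}$ has interior'' to genuine surjectivity of $\psi_{m_0}$, proving the corollary directly, and then obtaining Theorem \ref{1.3a} from Theorem \ref{1.3}. Since Theorem \ref{1.3} is proved independently of Theorem \ref{1.3a}, there is no circularity, and your route has the merit of making the paper self-contained, removing the external appeal to \cite{Boudi} for this statement.
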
\s
 
   \begin{theorem} \label{1.3aa}
   Let $A$ be a Banach algebra. Suppose that every closed left ideal in $A$ is countably generated. Then $A$ is finite dimensional.\qed
\end{theorem}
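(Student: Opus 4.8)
The plan is to show that the hypothesis --- every closed left ideal is countably generated --- forces the formally stronger statement that every closed left ideal is \emph{finitely} generated, whereupon Theorem \ref{1.2} immediately gives that $A$ is finite dimensional. Thus the whole argument reduces to a single implication, valid under the standing hypothesis: a closed, countably generated left ideal is finitely generated.

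To prove this implication, fix a closed left ideal $I$ and write $I=\langle a_1,a_2,\dots\rangle$ for some sequence $(a_n)$ in $I$. Put $I_n=\langle a_1,\dots,a_n\rangle = A^{\sharp}a_1+\cdots+A^{\sharp}a_n$, so that $I_1\subseteq I_2\subseteq\cdots$ and $I=\bigcup_{n\in\N}I_n$. First I would check that each $I_n$ is closed: its closure ${\overline{I_n}}$ is a closed left ideal of $A$, hence countably generated by hypothesis, and then Theorem \ref{1.3a} applied to the left ideal $I_n$ yields $I_n={\overline{I_n}}$. Now $I$, being a closed subspace of the Banach space $A$, is itself a Banach space, and it is the union of the countable increasing chain of \emph{closed} linear subspaces $I_n$. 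By the Baire category theorem, not every $I_n$ can be nowhere dense in $I$, so some $I_n$ has nonempty interior in $I$; since $I_n$ is a linear subspace, this forces $I_n=I$. Hence $I=\langle a_1,\dots,a_n\rangle$ is finitely generated, as required.

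With the implication in hand, every closed left ideal of $A$ is finitely generated, and Theorem \ref{1.2} (the Sinclair--Tullo theorem) shows that $\dim_{\C}A<\infty$.

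The argument is short, and I do not anticipate a serious obstacle; the one point that must not be overlooked is that, in the Baire step, the pieces $I_n$ must be genuinely closed. This cannot be obtained from Theorem \ref{1.3} alone, since there is no reason for ${\overline{I_n}}$ to be finitely generated; it is exactly here that Theorem \ref{1.3a}, together with the hypothesis, is essential. (Equivalently, one could first note that, under the hypothesis, Theorem \ref{1.3a} makes \emph{every} left ideal of $A$ closed, and then run the same Baire argument; the bookkeeping is identical.)
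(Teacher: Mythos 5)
Your argument is correct, and there is no gap: each $\overline{I_n}$ is a closed left ideal, hence countably generated under the standing hypothesis, so Theorem \ref{1.3a} does make every $I_n$ closed; the Baire step and the subspace-with-interior argument are standard; and Theorem \ref{1.2} finishes. Note that the paper itself gives no proof of Theorem \ref{1.3aa} (it is quoted from Boudi), but your reduction is precisely the paper's Corollary \ref{1.3b} (``closed and countably generated implies finitely generated''), which the paper proves immediately afterwards by essentially the same Baire argument --- the only cosmetic difference being that the paper applies Baire to the closures $\overline{J}_n$ and invokes Theorem \ref{1.3a} just once, for the single $J_{n_0}$ with $\overline{J}_{n_0}=I$ (so that Corollary \ref{1.3b} needs no global hypothesis on $A$), whereas you use the global hypothesis to close up every $I_n$ before applying Baire. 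Since Corollary \ref{1.3b} is deduced from Theorem \ref{1.3a} and not from Theorem \ref{1.3aa}, your route (Theorem \ref{1.3a} plus Baire, then Sinclair--Tullo) involves no circularity within the paper's logical structure; in effect you have supplied the proof the paper leaves to the reference.
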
\s

 \begin{corollary} \label{1.3b}
Each closed, countably-generated left  ideal   in a Banach algebra  is finitely generated.
 \end{corollary}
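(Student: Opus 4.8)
The plan is to deduce Corollary \ref{1.3b} from Theorem \ref{1.3a} by means of a Baire category argument. Let $A$ be a Banach algebra and let $I$ be a closed left ideal of $A$ that is countably generated, say $I=\langle S\rangle$ with $S=\{a_n:n\in\N\}\subseteq I$. For $n\in\N$ set $I_n=\langle a_1,\dots,a_n\rangle$, a finitely generated left ideal; since, by the formula for $\langle S\rangle$, every element of $\langle S\rangle$ is a \emph{finite} sum of the form $\sum_i c_ia_{k_i}$ with $c_i\in A^{\sharp}$, we have $I_1\subseteq I_2\subseteq\cdots$ and $\bigcup_{n}I_n=I$.

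First I would pass to closures. Put $J_n=\overline{I_n}$; then each $J_n$ is a closed left ideal of $A$ with $I_n\subseteq J_n\subseteq\overline I=I$, so that $\bigcup_n J_n=I$ as well. Since $I$ is closed in $A$, it is itself a Banach space, hence a nonempty complete metric space; by the Baire category theorem it is not the union of countably many closed, nowhere-dense subsets, so some $J_N$ has nonempty interior relative to $I$. As $J_N$ is a closed linear subspace of $I$, this forces $J_N=I$; that is, $I=\overline{I_N}=\overline{\langle a_1,\dots,a_N\rangle}$.

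It then remains to see that $I_N=\langle a_1,\dots,a_N\rangle$ is already closed. Here I would apply Theorem \ref{1.3a} to the left ideal $I_N$: its closure is $\overline{I_N}=I$, which is countably generated by hypothesis, so Theorem \ref{1.3a} gives that $I_N$ is closed in $A$. Therefore $I=\overline{I_N}=I_N=\langle a_1,\dots,a_N\rangle$ is finitely generated, which is the assertion.

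The argument needs no new estimates, so I do not anticipate a serious obstacle; the one point requiring care is the Baire category step, where it is essential both that $I$ be closed (so that it is complete in the induced metric) and that each $J_n$ be a \emph{closed} subspace of $I$ (so that a single $J_N$ having an interior point must be all of $I$). The real content is already packaged in Theorems \ref{1.3} and \ref{1.3a}; the corollary is just the observation that, after one application of Baire's theorem, a countably generated closed left ideal coincides with the closure of one of its finitely generated subideals.
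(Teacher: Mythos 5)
Your argument is correct and is essentially the paper's own proof: the same exhaustion $I=\bigcup_n\langle a_1,\dots,a_n\rangle$, the same Baire category step forcing $\overline{\langle a_1,\dots,a_N\rangle}=I$ for some $N$, and the same appeal to Theorem \ref{1.3a} to conclude that $\langle a_1,\dots,a_N\rangle$ is closed, hence equal to $I$. Your extra care about taking interiors relative to $I$ and about a closed subspace with interior being all of $I$ only makes explicit what the paper leaves implicit.
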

 
 \begin{proof}  Let $I$ be a closed, countably-generated left ideal in a Banach algebra $A$, say $I  =\langle S\rangle$, where $S= \{a_n : n\in\N\}$.  For $n\in \N$, set 
 $J_n = \langle a_1, \dots, a_n\rangle$, so that $J_n \subset J_{n+1} \,\;(n\in \N)$ and $\bigcup\{J_n : n\in\N\} = I$.   By Baire's category theorem, 
there exists $n_0\in\N$ such that  ${\rm int\/}\overline{J}_{n_0} \neq \emptyset$.  But then $\overline{J}_{n_0} = I$, 
and so,  by Theorem \ref{1.3a},  $J_{n_0}$ is closed in $A$.  Thus $I = J_{n_0}$ is finitely generated.
 \end{proof}
\medskip

\section{The general case}

\subsection{The families ${\frak M}_\infty (A)$ and ${\frak N}_\infty (A)$} In this section, we shall consider algebras which are not necessarily  commutative. 

Let $A$ be  an algebra.  Then the families ${\frak I}_\infty (A)$ and  ${\frak U}_\infty (A)$, which were defined above, are each
 a partially ordered set with respect to inclusion. \s

\begin{theorem} \label{2.1}
Let $A$ be an algebra.  Then each member of the family ${\frak I}_\infty(A)$ is contained in a maximal element of the family. 
\end{theorem}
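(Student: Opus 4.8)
The plan is to apply Zorn's lemma to the partially ordered set $({\frak I}_\infty(A), \subset)$. The one thing that needs checking is that the union of a chain in ${\frak I}_\infty(A)$ is again a member of ${\frak I}_\infty(A)$: it is obviously a left ideal, so the real content is that it is not finitely generated. So let $\{I_\alpha : \alpha \in \Lambda\}$ be a chain in ${\frak I}_\infty(A)$, totally ordered by inclusion, and set $I = \bigcup_{\alpha \in \Lambda} I_\alpha$. Then $I$ is a left ideal in $A$, and it contains each $I_\alpha$; I must show $I \notin {\frak I}_\infty(A)$ leads to a contradiction, i.e.\ that $I$ finitely generated is impossible.

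Suppose, for contradiction, that $I = \langle b_1, \dots, b_n \rangle$ for some $b_1, \dots, b_n \in A$. Each $b_j$ lies in $I$, hence in some $I_{\alpha_j}$; since the family is a chain, there is a single index $\beta$ with $b_1, \dots, b_n \in I_\beta$ (take $I_\beta$ to be the largest of $I_{\alpha_1}, \dots, I_{\alpha_n}$, which exists because a chain of finitely many elements has a maximum). But then $I = \langle b_1, \dots, b_n\rangle \subset I_\beta \subset I$, so $I_\beta = I$ is finitely generated, contradicting $I_\beta \in {\frak I}_\infty(A)$. Hence $I$ is not finitely generated, so $I \in {\frak I}_\infty(A)$, and $I$ is an upper bound in ${\frak I}_\infty(A)$ for the chain.

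This verification is the only step with any substance, and it is genuinely easy — the key point is simply that finitely many generators can only ``see'' finitely many members of the chain, hence lie at a single level. One should also note that ${\frak I}_\infty(A)$ is nonempty precisely when $A$ is not itself left Noetherian; the statement is vacuous otherwise, and in any case Zorn's lemma is to be applied to the sub-poset of ${\frak I}_\infty(A)$ consisting of those ideals containing a given fixed member, which is nonempty (it contains that member) and in which every chain has an upper bound by the argument just given. Applying Zorn's lemma to this sub-poset yields a maximal element $M$ of it; since any element of ${\frak I}_\infty(A)$ properly containing $M$ would also contain the fixed ideal and so contradict maximality of $M$ in the sub-poset, $M$ is in fact a maximal element of ${\frak I}_\infty(A)$ itself, which is what was required. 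The same argument, verbatim, would handle ${\frak U}_\infty(A)$ as well, replacing ``finitely generated'' by ``countably generated'' and ``finitely many'' by ``countably many'' — a countable generating set meets only countably many levels of the chain, but a countable union of a chain need not have a top element, so this variant would instead require the chain to have countable cofinality or a different argument; I expect the authors state only the ${\frak I}_\infty$ version here for that reason.
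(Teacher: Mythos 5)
Your proof is correct and follows essentially the same route as the paper's: take the union of a chain, observe that finitely many generators would all lie in a single member of the chain, forcing that member to be finitely generated, and then apply Zorn's lemma (the paper, like you, works with the chains/sub-poset of ideals containing the given member). Your closing remark about why the countably generated case needs a different argument is also consistent with the paper, which proves that case separately for Banach algebras using closure results.
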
 

\begin{proof}  Let $\mathcal C$ be a chain in the partially ordered set $({\frak I}_\infty(A), \subset )$, each member of which contains the 
specified member of the family,  and define $I= \bigcup\{J: J\in {\mathcal C}\}$, so that  $I$ is a left ideal in $A$.  

Assume towards a contradiction that $I$ is finitely generated, say $I=\langle a_1,\dots,a_n\rangle$, where $a_1,\dots,a_n\in I$. Then there 
exists $ J\in {\mathcal C}$ such that $a_1,\dots,a_n\in J$, and hence $J= \langle a_1,\dots,a_n\rangle$ is finitely generated, a contradiction. Thus 
$I\in {\frak I}_\infty (A)$.  Clearly,  $I$  is an upper bound for  $\mathcal C$.

It follows from Zorn's lemma that ${\frak I}_\infty(A)$ contains a maximal element, and that this maximal element contains the specified member of the family. 
\end{proof}\s

The following theorem   applies only to Banach algebras.\s

\begin{theorem} \label{2.1a}
Let $A$ be a    Banach algebra.  Then each member of the family ${\frak U}_\infty(A)$ 
is contained in a maximal element of the family. 
\end{theorem}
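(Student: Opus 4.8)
The plan is to mimic the proof of Theorem \ref{2.1}, using Zorn's lemma on the partially ordered set $({\frak U}_\infty(A), \subset)$, but with one crucial modification: when checking that the union of a chain stays in the family, we can no longer argue purely algebraically, and this is exactly where the Banach-algebra hypothesis must enter. So first I would take a chain $\mathcal C$ in $({\frak U}_\infty(A), \subset)$, each member containing the specified left ideal, and set $I = \bigcup\{J : J \in {\mathcal C}\}$, which is again a left ideal of $A$. The task is to show $I \in {\frak U}_\infty(A)$, i.e.\ that $I$ is not countably generated; then $I$ is clearly an upper bound for $\mathcal C$, and Zorn's lemma finishes the argument.

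The heart of the matter is therefore the following: if $I = \bigcup\{J : J \in {\mathcal C}\}$ is countably generated, derive a contradiction. Suppose $I = \langle a_k : k \in \N\rangle$ with each $a_k \in I$. For each $n \in \N$ set $I_n = \langle a_1, \dots, a_n\rangle$, so $I_n \subseteq I_{n+1}$ and $\bigcup_n I_n = I$. The naive hope --- that some single $J \in {\mathcal C}$ contains all the $a_k$ --- fails because the generating set is infinite, so I cannot immediately locate one member of the chain equal to $I$. Instead, I would use a Baire-category argument on $I$ itself. Here is the subtlety: $I$ need not be closed, so Baire's theorem does not apply to $I$ with the subspace norm directly. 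However, I can pass to the closure $\overline{I}$, which is a closed left ideal, hence a Banach space. Since $\overline{I} = \bigcup_n \overline{I_n}$ and each $\overline{I_n}$ is closed in $\overline{I}$, Baire's category theorem gives an $n_0$ with ${\rm int}\,\overline{I_{n_0}} \neq \emptyset$ (interior relative to $\overline{I}$), and hence $\overline{I_{n_0}} = \overline{I}$. By Theorem \ref{1.3}, $I_{n_0}$ is closed, so $I_{n_0} = \overline{I_{n_0}} = \overline{I} \supseteq I \supseteq I_{n_0}$, forcing $I = I_{n_0} = \langle a_1, \dots, a_{n_0}\rangle$ to be finitely generated.

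Now $I$ is finitely generated, say $I = \langle a_1, \dots, a_{n_0}\rangle$ with $a_1, \dots, a_{n_0} \in I = \bigcup\{J : J \in {\mathcal C}\}$. Since ${\mathcal C}$ is a chain, there is a single $J \in {\mathcal C}$ with $a_1, \dots, a_{n_0} \in J$, whence $I = \langle a_1, \dots, a_{n_0}\rangle \subseteq J \subseteq I$, so $J = I$ is finitely generated. But $J \in {\mathfrak U}_\infty(A) \subseteq {\mathfrak I}_\infty(A)$, so $J$ is not finitely generated --- a contradiction. Therefore $I$ is not countably generated, i.e.\ $I \in {\mathfrak U}_\infty(A)$, and it bounds $\mathcal C$ above. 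By Zorn's lemma, ${\mathfrak U}_\infty(A)$ has a maximal element, and the standard argument (applying Zorn to the sub-poset of members containing the specified left ideal, or simply noting the chain was taken through it) shows this maximal element can be chosen to contain the given member of the family.

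The main obstacle, and the only place the Banach structure is genuinely used, is the step showing that a countably-generated union of an increasing chain of finitely-generated left ideals is itself finitely generated: this rests on Baire's category theorem applied in the Banach space $\overline{I}$ together with Theorem \ref{1.3} (or, more efficiently, one can invoke Corollary \ref{1.3b} directly once one knows $I$ is closed --- but $I$ is not obviously closed, so the cleanest route is the Baire argument via $\overline{I}$ and Theorem \ref{1.3} as above, essentially re-running the proof of Corollary \ref{1.3b}). Everything else is the formal Zorn's-lemma scaffolding copied from Theorem \ref{2.1}. One should double-check that $\overline{I} = \bigcup_n \overline{I_n}$ --- this holds because $\bigcup_n I_n = I$ is dense in $\overline{I}$ and the $\overline{I_n}$ increase, so their union is a dense subset of $\overline{I}$ which is also a countable union of closed sets, to which Baire applies.
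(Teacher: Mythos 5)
The Zorn scaffolding is fine, but the core step --- showing that the union $I$ of the chain is itself not countably generated --- does not work, and in fact cannot work. The technical claim on which your Baire argument rests, namely $\overline{I}=\bigcup_n \overline{I_n}$, is unjustified: a dense subset of $\overline{I}$ that happens to be a countable union of closed sets need not be all of $\overline{I}$ (a countable dense set is already of this form), and Baire's theorem needs the closed sets to cover the complete space, not merely to have dense union. Indeed, if that equality held, Baire would force $\overline{I_{n_0}}=\overline{I}$ for some $n_0$, so your ``double-check'' is equivalent to the very conclusion you are after. Moreover the implication you extract from it --- that a countably generated left ideal is automatically finitely generated --- is false for non-closed ideals: in the commutative Banach algebra $C(\I)$, the ideal of all functions vanishing on a neighbourhood of $0$ is countably generated (by the functions $f_n\colon x\mapsto \max(x-1/n,0)$, $n\in\N$) but is not finitely generated. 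Corollary \ref{1.3b} genuinely needs the hypothesis that the ideal is closed.

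More seriously, the statement you are aiming for at that point is itself false: the union of a chain in ${\frak U}_\infty(A)$ need not belong to ${\frak U}_\infty(A)$. In $C(\I)$, let $J_m$ be the closed ideal of all functions vanishing on $[0,1/m]$. Each $J_m$ is not finitely generated (if $g_1,\dots,g_k$ generated it, put $g=\lv g_1\rv+\cdots+\lv g_k\rv$; then $\sqrt{g}\in J_m$, and writing $\sqrt{g}=\sum_i h_ig_i$ gives $\sqrt{g}\leq Cg$ with $C=\max_i\lv h_i\rv_{\I}$, so $g\geq 1/C^2$ off its zero set; that zero set is exactly $[0,1/m]$, which would then be open in $\I$ --- it is not), hence by Corollary \ref{1.3b} not countably generated; yet the union of the increasing chain $(J_m)$ is precisely the countably generated ideal of the previous paragraph. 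This is exactly why the paper does not take $I$ as the upper bound but $\overline{I}$: assume $\overline{I}$ is countably generated; being closed, it is finitely generated by Corollary \ref{1.3b}; then Theorem \ref{1.3} shows that $I$ is closed, so $I=\overline{I}$ is finitely generated; its finitely many generators lie in a single member $J$ of the chain, so $J$ is finitely generated, contradicting $J\in{\frak U}_\infty(A)$. Hence $\overline{I}\in{\frak U}_\infty(A)$ and serves as the required upper bound; with this replacement your argument becomes the paper's proof.
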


\begin{proof} As above,  consider a   chain $\mathcal C$   in the partially ordered set $({\frak U}_\infty(A), \subset)$, 
and define $I= \bigcup\{J: J\in {\mathcal C}\}$.  Then $I$ is a left ideal in $A$ and $\overline{I}$ is a closed left ideal in $A$.  

Assume towards a contradiction that $\overline{I}$ is {\CG}. By  Corollary \ref{1.3b}, $\overline{I}$  is finitely generated.  By Theorem \ref{1.3}, $I$
 is closed, and so $I$ is finitely generated.  As in Theorem \ref{2.1}, this is a contra\-diction. Thus  $\overline{I}\in {\frak U}_\infty(A)$. Clearly,
$\overline{I}$  is an upper bound for  $\mathcal C$ in ${\frak U}_\infty(A)$, and so Zorn's lemma again applies.
 \end{proof}\s

The sets of maximal elements in ${\frak I}_\infty(A)$ and ${\frak U}_\infty(A)$ are denoted by ${\frak M}_\infty(A)$ 
(for an algebra $A$) and ${\frak N}_\infty(A)$ (for a Banach algebra $A$), respectively.\s

 \begin{corollary} \label{2.1b}
 Let $A$ be an infinite-dimensional Banach algebra. Then the families  ${\frak M}_\infty(A)$ and 
${\frak N}_\infty(A)$ are    non-empty and equal, and each member of this family is closed. 
\end{corollary}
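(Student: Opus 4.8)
The plan is to prove Corollary \ref{2.1b} by stringing together Theorems \ref{1.3aa}, \ref{2.1}, \ref{2.1a}, and \ref{1.3} in the right order, since all the hard analytic work has already been done in the preceding results. First I would establish non-emptiness of both families. Since $A$ is infinite dimensional, not every closed left ideal in $A$ can be countably generated: indeed, if every closed left ideal were countably generated, then by Theorem \ref{1.3aa} the algebra $A$ would be finite dimensional, a contradiction. So there is a closed left ideal $I_0$ that is not countably generated, i.e.\ $I_0 \in {\frak U}_\infty(A)$. Hence ${\frak U}_\infty(A)\neq\emptyset$, and since ${\frak U}_\infty(A)\subset {\frak I}_\infty(A)$, also ${\frak I}_\infty(A)\neq\emptyset$. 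Applying Theorem \ref{2.1} to any member of ${\frak I}_\infty(A)$ produces a maximal element, so ${\frak M}_\infty(A)\neq\emptyset$; applying Theorem \ref{2.1a} to $I_0$ produces a maximal element of ${\frak U}_\infty(A)$, so ${\frak N}_\infty(A)\neq\emptyset$.

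Next I would show every member of ${\frak N}_\infty(A)$ is closed. Let $I\in {\frak N}_\infty(A)$, so $I$ is a maximal element of $({\frak U}_\infty(A),\subset)$. Then $\overline I$ is a left ideal containing $I$. If $\overline I$ were countably generated then, being closed, Corollary \ref{1.3b} would make it finitely generated, and then Theorem \ref{1.3} would force $I$ itself to be closed and finitely generated — but a finitely generated left ideal is certainly countably generated, contradicting $I\in {\frak U}_\infty(A)$. (Alternatively and more directly: if $\overline I$ is countably generated, then $\overline I\in {\frak U}_\infty(A)$ by the argument just given would fail, so in fact we must check $\overline I\notin {\frak U}_\infty(A)$ only to then derive the contradiction; cleaner is to observe that $\overline I \supset I$ with $\overline I$ not countably generated means $\overline I\in {\frak U}_\infty(A)$, and maximality of $I$ then gives $I=\overline I$.) So the crisp version: $\overline I\in {\frak U}_\infty(A)$ by the same reasoning as in the proof of Theorem \ref{2.1a}, namely if $\overline I$ were countably generated then $\overline I$, hence $I$, would be finitely generated, contradiction; therefore $\overline I\in {\frak U}_\infty(A)$, and by maximality $I=\overline I$ is closed.

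Finally I would prove ${\frak M}_\infty(A)={\frak N}_\infty(A)$. For the inclusion ${\frak N}_\infty(A)\subset {\frak M}_\infty(A)$: take $I\in {\frak N}_\infty(A)$; then $I$ is not countably generated, so a fortiori not finitely generated, so $I\in {\frak I}_\infty(A)$; if $I$ were not maximal in ${\frak I}_\infty(A)$ there would be a left ideal $K\supsetneq I$ with $K$ not finitely generated, but by Theorem \ref{2.1a} (applied to $K$, or rather by first passing to a member of ${\frak U}_\infty(A)$) one must argue $K$ is not countably generated — here one uses Corollary \ref{1.3b} together with Theorem \ref{1.3}: if $K\in {\frak I}_\infty(A)$ then $\overline K$ is not finitely generated (else $K$ closed hence finitely generated), and if $\overline K$ were countably generated it would by Corollary \ref{1.3b} be finitely generated, contradiction; so $\overline K\in {\frak U}_\infty(A)$ and $K\subset\overline K$ contradicts maximality of $I$ in ${\frak U}_\infty(A)$. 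For the reverse inclusion ${\frak M}_\infty(A)\subset {\frak N}_\infty(A)$: take $I\in {\frak M}_\infty(A)$, maximal not-finitely-generated; by the same $\overline I\in {\frak U}_\infty(A)$ argument and maximality in ${\frak I}_\infty(A)$ one gets $I=\overline I$ is closed, and then $I$ not finitely generated together with closedness and Corollary \ref{1.3b} forces $I$ not countably generated, so $I\in {\frak U}_\infty(A)$; any $K\supsetneq I$ in ${\frak U}_\infty(A)$ would lie in ${\frak I}_\infty(A)$, contradicting maximality of $I$ there, so $I$ is maximal in ${\frak U}_\infty(A)$, i.e.\ $I\in {\frak N}_\infty(A)$. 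The main obstacle — really the only subtlety — is keeping straight the interplay between the two notions of "generated": the key recurring move is that a closed left ideal which is countably generated is automatically finitely generated (Corollary \ref{1.3b}), so on closed ideals the families ${\frak I}_\infty$ and ${\frak U}_\infty$ coincide, and Theorem \ref{1.3} lets one pass between an ideal and its closure; once that dictionary is in hand every step is a short bookkeeping argument.
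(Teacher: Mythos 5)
Your proposal is correct and takes essentially the same route as the paper: non-emptiness via Theorem \ref{1.3aa} combined with Theorems \ref{2.1} and \ref{2.1a}, closedness by showing that the closure of a maximal element remains in the relevant family (Theorem \ref{1.3}, Theorem \ref{1.3a}, Corollary \ref{1.3b}) and then invoking maximality, and equality of the two families from the key dictionary that a closed, countably-generated left ideal is finitely generated. The only differences are cosmetic: you spell out the non-emptiness step that the paper dismisses as `clear', and in one inclusion you argue directly with the closure of an arbitrary strictly larger ideal rather than passing to a maximal element above it.
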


\begin{proof}  Clearly, both families are non-empty and each member of either of these families is closed because, by Theorem \ref{1.3a}, their closures belong to the respective families. 

Take $M \in {\frak M}_\infty(A)$. Then $M \in {\frak I}_\infty(A)$, and so there exists an element $N \in {\frak N}_\infty(A)$  with $M\subset N$. 
 The closed left ideal $N$  is not finitely generated, and so, by Corollary \ref{1.3b}, $N$ is not {\CG}. Thus
 $N \in {\frak I}_\infty(A)$, and so 
$M=N \in {\frak N}_\infty(A)$.

Take $N \in {\frak N}_\infty(A)$. Then $N \in  {\frak I}_\infty(A)$, and so there exists  an element  $M \in {\frak M}_\infty(A)$  with $N\subset M$.
By Corollary \ref{1.3b}, $M\in {\frak U}_\infty(A)$, and so $N=M \in {\frak M}_\infty(A)$.

We  have shown that ${\frak M}_\infty(A) = {\frak N}_\infty(A)$.
\end{proof}\s

We wish to study the following conjecture.\s

{\bf Conjecture}  {\it Let $A$ be a unital Banach algebra. Suppose that all maximal left ideals are finitely {\rm(}or even singly{\rm)} generated  in $A$. 
  Then $A$ is finite dimensional.}\s
  
We remark that this is a question about Banach algebras, not a purely algebraic  question.   For consider a large, infinite-dimensional field $F$ 
containing $\C$.  Then $F$ has only one proper ideal, namely $\{0\}$, and this is finitely generated, but $F$ is not finite-dimensional over $\C$. 
 However, by the Gel'fand--Mazur theorem \cite[Theorem 2.2.42]{D}, such a field  $F$  cannot be a Banach algebra.\s

The above conjecture  is  considered in \cite{DKKKL}  in the special case in which $A= {\B}(E)$, the Banach algebra of all bounded linear 
operators on a Banach space $E$, and it will be established there  for `many' Banach spaces.  The question is left open for the Banach algebra
 ${\B}(C(\I))$, where $C(\I)$ is the Banach space  of all continuous functions on the closed unit interval $\I$.

 We  make the following remark  about a special case of the conjecture. Let $A$ be a unital $C^*$-algebra. Then it can be shown rather easily that each
 finitely-generated left ideal in $A$ is singly generated by a self-adjoint projection,  and then that $A$ is finite dimensional whenever each maximal left ideal
 is finitely generated, and so our conjecture holds for the class of $C^*$-algebras.
 
   Let $A$ be a unital Banach algebra,  and take $M \in {\frak M}_\infty(A)$.  One might suspect that $M$ is necessarily a maximal left ideal in $A$; 
if this were true, then our conjecture would be immediately positively resolved. However this is not true.  Trivially, it is true in the special case 
where $\codim M =1$; the following example shows that it need not be true in the case where $\codim M =2$.\s

\begin{example}\label{3.12}
{\rm  We begin with  the unital, three-dimensional algebra $B$  which consists of the upper-triangular matrices in  ${\mathbb M}_{\,2}$.   
Thus we identify $$B =\C \/p \oplus \C\/ q\oplus \C\/ r\,,$$
 where 
  \begin{equation}\label{(R)}
p= \left(
\begin{array}{cc}
1&0\\
0&0
 \end{array}
\right)\,, \quad q= \left(
\begin{array}{cc}
0&0\\
0&1
 \end{array}
\right),\quad r = \left(
\begin{array}{cc}
0&1\\
0&0
 \end{array}\right).
\end{equation}
Hence the product in $B$ is specified by $$p^2=p,\; \;q^2=q,\;\;pq=qp=0,\;\;r^2=0,\;\;pr =rq=r,\;\; rp =qr = 0\,.
$$
The identity of $B$ is $e = p+q$; the radical of $B$ is $J(B) =\C r$. 
 
 We define $M= \C p$. Then $M$ is a left ideal in $B$  of codimension $2$.
 
 We further define $I= \C p \oplus \C r$ and $J = \C q \oplus \C r$. Then both $I$ and $J$ are left ideals in $B$ of codimension $1$,  both are maximal 
left ideals, and they are the only two maximal left ideals in $B$. Clearly, $M\subset I$, but $M\not\subset J$; further,  $I\cap J = \C\/r = J(B)$. 
 Since $M\subsetneq I$, the left ideal $M$  is not a maximal left ideal.

We define $$\lV \alpha p + \beta q + \gamma r\rV= \max\{\lv \alpha\rv,\lv \beta\rv,\lv \gamma\rv\}\quad (\alpha,\beta,\gamma \in \C)\,.
$$
Then $(B, \norm)$ is a Banach algebra with $\lV e \rV =1$.
 
 We now take $(E, \norm)$ to be an infinite-dimensional Banach space, and set $K=E\oplus E\oplus E\oplus E$; a generic element of $K$ is regarded as a $2\times 2$ matrix
 $$
{\bf x}=  (x_{i,j}) = \left(
\begin{array}{cc}
x_{1,1}&x_{1,2}\\
x_{2,1}&x_{2,2}
 \end{array}
\right)\,,
 $$
where $x_{1,1},x_{1,2},x_{2,1},x_{2,2}\in E$, and so $K={\mathbb M}_{\,2}(E)$ as a linear space.  The norm on $K$ is given by 
$$
\lV {\bf x}\rV = \lV  x_{1,1}\rV +\lV  x_{1,2}\rV  +\lV  x_{2,1}\rV + \lV  x_{2,2}\rV\quad ({\bf x} \in K)\,,
$$
so that $(K,\norm)$ is a Banach space.
 
 The left and right actions of $B$ on $K$ are   given by `matrix-multi\-plication  on the left and right', respectively; these actions are denoted by $\,\cdot\,$.
 Clearly, these are  associative actions, and $(K,\,\cdot\,,\norm)$ is a  unital Banach $B$-bimodule.
 
 We now define the linear space $A = B \oplus K$, with the norm given by 
$$
\lV (b, {\bf x}) \rV= \lV b \rV + \lV {\bf x}\rV\quad (b\in B,\,{\bf x}\in K) 
 $$
 and the product given by 
 $$
 (b, {\bf x})\,(c,{\bf y}) = (bc,\,b\,\cdot\,{\bf y} + {\bf x}\,\cdot\, c)\quad (b,c \in B,\,{\bf x}, {\bf y} \in K)\,.
 $$
Then $A$ is a unital Banach algebra, with identity $(e,0)$. We regard $B$ and $K$ as subspaces of $A$; clearly, $K$ is an ideal in $A$.

The space $K$ satisfies $K^2=\{0\}$, and so $K\subset J(A)$. Thus 
$$
J(A) = \C r \oplus K\,.
$$
  There are just two maximal left ideals  in $A$; they are $I+K$ and $J+K$, and $(I+K)\cap (J+K) = J(A)$.
The closed left ideal $M + K$ has codimension $2$ in $A$;  the only maximal left ideal that contains $M$ is $I+K$.

We  {\it  claim\/} that $I+K$ is a finitely-generated left ideal of $A$; indeed, we claim  that $I+K = Ap + Ar$. Since $p,r \in I$, we have $Ap+Ar\subset I+K$.
 Certainly, $p,r \in Ap + Ar$, and so $I\subset Ap+Ar$. Now take ${\bf x}\in K$. Then
$$
{\bf x} = \left(
\begin{array}{cc}
x_{1,1}&0\\
x_{2,1}&0
 \end{array}
\right)\left(
\begin{array}{cc}
1&0\\
0&0
 \end{array}
\right)
 +
 \left(
\begin{array}{cc}
x_{1,2}&0\\
x_{2,2}&0
 \end{array}
\right)\left(
\begin{array}{cc}
0&1\\
0&0
 \end{array}\right) \in Ap + Ar\,,
$$
and so $K\subset Ap+Ar$, giving the claim.

We also {\it  claim\/} that $M+K$ is not finitely generated.   Indeed, assume towards a contradiction that 
$$
K\subset Ap + \sum_{k=1}^n A{\bf x}^{(k)}\,,
$$
where ${\bf x}^{(1)},\dots, {\bf x}^{(n)}  \in K$.   Since $K^2=\{0\}$ and $Bp\cap K =\{0\}$, in fact
$$
K\subset Kp + \sum_{k=1}^n B{\bf x}^{(k)}\,.
$$
In particular,  for each $x \in E$,  there exits $\alpha_k, \beta_k, \gamma_k \in \C$ for $k=1,\dots,n$ such that
$$
\left(
\begin{array}{cc}
0&0\\
0&x
 \end{array}
\right) \in Kp + \sum_{k=1}^n  \left(
\begin{array}{cc}
\alpha_k&\gamma_k\\
0&\beta_k
 \end{array}
\right){\bf x}^{(k)}\,,
$$
and so
$$x = \sum_{k=1}^n \beta_k x_{2,2}^{(k)}\,.$$
This shows that $E$ is spanned by $\{x_{2,2}^{(1)},\dots,x_{2,2}^{(n)}\}$, a contradiction of the fact that $E$ is infinite dimensional.  This gives the claim.

Thus $M +K\in {\mathfrak M}_\infty(A)$, but $M+K$ is not a maximal left ideal of $A$.

We also note that the maximal left ideal $J+K$ of $A$ is not finitely  generated, and so our example is not a counter-example to our conjecture.

Indeed, assume towards a contradiction that 
$$
K \subset Aq + Ar + \sum_{k=1}^n A{\bf x}^{(k)}\,,
$$
where ${\bf x}^{(1)},\dots, {\bf x}^{(n)}  \in K$.  By considering the element 
$$
\left(
\begin{array}{cc}
x&0\\
0&0
 \end{array}
\right)
$$
of $K$, we see that $E$ is spanned by $\{x_{1,1}^{(1)},\dots,x_{1,1}^{(n)}, x_{1,2}^{(1)},\dots,x_{1,2}^{(n)}\}$, again a contradiction of the
 fact that $E$ is infinite dimensional.}\qed
 \end{example}

Let $A$ be a unital Banach algebra. For a positive resolution of the above conjecture, we must prove that, whenever there exists an  element $M \in {\frak M}_\infty(A)$, 
 $A$ contains a maximal left ideal that is not finitely generated (but this ideal need not contain $M$).  There is an algebraic argument that 
does establish this in the special case where   $M$ has finite codimension in $A$. \s

\begin{lemma}\label{1.6}
Let $I$ be a left ideal in ${\mathbb M}_{\,n}$, where $n\in\N$.   Then:\s

{\rm (i)} $\dim I =kn$ for some $k\in \{0,\dots, n\}$;\s

{\rm (ii)}  $I$ is a maximal left ideal if and only if $\codim I= n$ in ${\mathbb M}_{\,n}$;\s  

{\rm (iii)}    in the case where $I$ is proper, there exists $x \in\C^{\,n}$ with $x\neq 0$ such that $ax =0\,\;(a\in I)$.
\end{lemma}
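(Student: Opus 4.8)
The plan is to classify all left ideals of ${\mathbb M}_{\,n}$ by subspaces of $\C^{\,n}$ and then read off (i)--(iii). For a subspace $W$ of $\C^{\,n}$, put
$$
L_W = \{a \in {\mathbb M}_{\,n} : ax = 0 \ \text{ for all }x \in W\}\,.
$$
Since $(ba)x = b(ax)$ for all $b \in {\mathbb M}_{\,n}$, each $L_W$ is a left ideal of ${\mathbb M}_{\,n}$. Regarding the $i$-th row of a matrix as a linear functional on $\C^{\,n}$, we have $a \in L_W$ precisely when every row of $a$ lies in the annihilator $W^{\perp}$; as the $n$ rows may be chosen arbitrarily and independently within $W^{\perp}$, we get $\dim L_W = n \dim W^{\perp} = n \codim W$, and $\codim W$ runs over $\{0,1,\dots,n\}$ as $W$ runs over the subspaces of $\C^{\,n}$.

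The substantive step is the converse: every left ideal $I$ of ${\mathbb M}_{\,n}$ has the form $I = L_W$, where $W := \{x \in \C^{\,n} : ax = 0 \text{ for all } a \in I\} = \bigcap_{a \in I}\ker a$. The inclusion $I \subseteq L_W$ is immediate. For the reverse, observe that $W$ is exactly the common zero set of the family of functionals arising as rows of elements of $I$; hence the span of that family has dimension $k := \codim W$, and we may choose among those functionals a basis $\phi_1,\dots,\phi_k$ of $W^{\perp}$, where $\phi_s$ is the $i_s$-th row of some $b_s \in I$. For $i \in \N_n$ and $s \in \N_k$, the matrix $E_{i,i_s}b_s$ lies in $I$ (because $I$ is a left ideal) and has $i$-th row equal to $\phi_s$ and every other row zero; these $nk$ matrices form a basis of $L_W$, so $L_W \subseteq I$. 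This also exhibits $W \mapsto L_W$ as an inclusion-reversing bijection from the subspaces of $\C^{\,n}$ onto the left ideals of ${\mathbb M}_{\,n}$, with inverse $I \mapsto \bigcap_{a\in I}\ker a$.

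It remains to deduce the three assertions. Statement (i) is the dimension count above, with $k = \codim W \in \{0,\dots,n\}$. For (ii): since ${\mathbb M}_{\,n} = L_{\{0\}}$, the proper left ideals correspond to the nonzero subspaces, so, by order-reversal, the maximal left ideals correspond to the minimal nonzero subspaces, that is, to the lines through the origin; for such $W$ we get $\dim L_W = n(n-1)$, i.e.\ $\codim I = n^2 - n(n-1) = n$ in ${\mathbb M}_{\,n}$, and conversely $\codim I = n$ forces $\dim W = 1$ and hence $I$ maximal. For (iii): if $I$ is proper then $W \neq \{0\}$, and any nonzero $x \in W$ satisfies $ax = 0$ for all $a \in I$.

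I do not expect a genuine obstacle. The one point requiring care is the reverse inclusion $L_W \subseteq I$: here one genuinely uses that $I$ is a \emph{left} ideal, so that left multiplication by a matrix unit $E_{i,i_s}$ transports the chosen row $\phi_s$ into the $i$-th row slot, together with standard finite-dimensional duality ($W^{\perp\perp} = W$) to know that the rows already span $W^{\perp}$. A less self-contained alternative would invoke the semisimplicity of ${\mathbb M}_{\,n}$ to write $I = {\mathbb M}_{\,n}e$ for an idempotent $e$ and note that ${\mathbb M}_{\,n}e = L_{\ker e}$.
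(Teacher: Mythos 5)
Your proof is correct and follows essentially the same route as the paper: the paper simply cites Jacobson for the bijection $W\mapsto \{a\in {\mathbb M}_{\,n} : ax=0\ (x\in W)\}$ between the subspaces of $\C^{\,n}$ and the left ideals of ${\mathbb M}_{\,n}$ and remarks that the lemma follows easily, whereas you supply a self-contained proof of that bijection (via rows as functionals and left multiplication by matrix units) and then draw the same conclusions (i)--(iii). There is no gap.
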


\begin{proof}   By \cite[Exercise 3, p.~173]{J}, there is a bijective map
$$
F\mapsto \{a \in \M_{\,n}: ax=0\;\,(x\in F)\}
$$
from the family of linear subspaces of $\C^{\,n}$ onto the family of left ideals of $\M_{\,n}$.  The result follows easily from this.
\end{proof}\s

\begin{lemma}\label{1.7}
Let $A$ be a unital algebra with an ideal $L$ such that $A/L = {\mathbb M}_{\,n}$ for some $n \in \N$.
 Suppose that each maximal left ideal in $A$ that contains $L$ is finitely generated in $A$. Then $L$ is finitely generated  as a left ideal in $A$.
\end{lemma}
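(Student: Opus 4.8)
The plan is to realise $L$ as a finite sum of finitely-generated left ideals, obtained by ``cutting down'' certain finitely-generated maximal left ideals of $A$ by right multiplication. Write $\pi\colon A\to A/L=\M_{\,n}$ for the quotient homomorphism. Since $L=\ker\pi$, the correspondence theorem gives an order isomorphism between the left ideals of $A$ that contain $L$ and the left ideals of $\M_{\,n}$, and under it the maximal left ideals correspond to the maximal left ideals.

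First I would fix some data. For each $k\in\N_n$ let $P_k=\{a\in\M_{\,n}:aE_{k,k}=0\}$ be the left annihilator of the matrix unit $E_{k,k}$; concretely $P_k$ is the set of matrices with zero $k$-th column, so $\codim P_k=n$, and hence $P_k$ is a maximal left ideal of $\M_{\,n}$ by Lemma \ref{1.6}(ii). Put $Q_k=\pi^{-1}(P_k)$, a maximal left ideal of $A$ that contains $L$; by hypothesis $Q_k$ is finitely generated, say $Q_k=Ac^{(k)}_1+\cdots+Ac^{(k)}_{m_k}$ with $c^{(k)}_1,\dots,c^{(k)}_{m_k}\in A$. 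Then choose $h_1,\dots,h_{n-1}\in A$ with $\pi(h_k)=E_{k,k}$, and set $h_n=1-h_1-\cdots-h_{n-1}$, so that $\pi(h_n)=\iota_n-\sum_{k<n}E_{k,k}=E_{n,n}$ and $h_1+\cdots+h_n=1$.

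The heart of the argument is the identity $L=\sum_{k=1}^n Q_kh_k$. For the inclusion $\supseteq$: since $\pi(Q_k)=P_k$ and $P_k$ annihilates $E_{k,k}$ on the right, we get $\pi(Q_kh_k)=P_kE_{k,k}=\{0\}$, so $Q_kh_k\subseteq\ker\pi=L$. For $\subseteq$: given $x\in L$ we have $x\in Q_k$ for every $k$ (because $L\subseteq Q_k$), whence $x=x\cdot1=\sum_{k=1}^n xh_k$ with each $xh_k\in Q_kh_k$. Finally, $Q_k=\sum_iAc^{(k)}_i$ immediately gives $Q_kh_k=\sum_iA\bigl(c^{(k)}_ih_k\bigr)$, so $L=\sum_{k=1}^n\sum_{i=1}^{m_k}A\bigl(c^{(k)}_ih_k\bigr)$ is the left ideal generated by the finite set $\{\,c^{(k)}_ih_k:k\in\N_n,\ 1\le i\le m_k\,\}$, which is exactly what is wanted.

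The only point that needs a little care — though it is not hard — is the bookkeeping in the choice of the $h_k$: one must arrange that $h_1+\cdots+h_n$ equals $1$ \emph{on the nose}, not merely modulo $L$, which is the reason for defining $h_n$ as the complement $1-h_1-\cdots-h_{n-1}$ rather than as an arbitrary preimage of $E_{n,n}$. Were the sum only congruent to $1$ modulo $L$, the displayed computation would yield $x\equiv\sum_k xh_k$ modulo $L$ only, and the induced recursion on $L$ would not terminate. Everything else — the ideal correspondence under $\pi$, the maximality of $P_k$ in $\M_{\,n}$ via Lemma \ref{1.6}(ii), and the fact that $Ih$ is a finitely-generated left ideal whenever $I$ is one and $h\in A$ — is routine verification.
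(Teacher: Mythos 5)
Your proof is correct, but it follows a genuinely different route from the paper's. The paper identifies $A$ with $\M_{\,n}\oplus L$ only as a linear space, works with the complementary idempotents $P_j=\iota_n-E_{j,j}$ (so that $M_j+L$ is the $j$-th column-annihilator maximal left ideal), enlarges the finite generating set $S$ so that it absorbs the discrepancies between products computed in $A$ and in $\M_{\,n}$, and then iterates the identity $L=LP_j+\langle S\rangle$ to obtain $L=LP_n\cdots P_1+\langle T\rangle=\langle T\rangle$, using that $P_n\cdots P_1=0$ in $\M_{\,n}$. You instead work directly with the quotient map $\pi$, lift the diagonal matrix units to elements $h_1,\dots,h_n$ of $A$ forming an exact partition of the identity, and get the one-step decomposition $L=\sum_{k=1}^n Q_kh_k$, where the $Q_k=\pi^{-1}(P_k)$ are the same column-annihilator maximal left ideals; finite generation of each $Q_k$ then yields the finite generating set $\{c^{(k)}_ih_k\}$ for $L$ immediately (note that each $c^{(k)}_ih_k$ does lie in $L$, and that $A$ unital lets you write $\langle\,\cdot\,\rangle$ as $A\,\cdot$). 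Both arguments rest on Lemma \ref{1.6}(ii) and on the finite generation of exactly these $n$ maximal left ideals containing $L$; what your version buys is the avoidance of the linear-space splitting, of the enlargement of $S$, and of the telescoping product, so it is the shorter and cleaner of the two, while the paper's version never needs to choose lifts of the matrix units. One small remark: the exact normalisation $h_1+\cdots+h_n=1$ is convenient but not as essential as you suggest; if the sum were only $1-\ell'$ with $\ell'\in L$, then $x=\sum_k xh_k+x\ell'$ and $A\ell'\subset L$, so adjoining the single extra generator $\ell'$ would repair the argument without any non-terminating recursion.
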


\begin{proof}  We identify $A$ with $\M_{\,n}\oplus L$  as a linear space, and  denote the product of matrices in ${\mathbb M}_{\,n}$ by $\,\cdot\,$.

Take  $j\in\N_n$.  Define $P_j = \iota_n-E_{j,j}$, where we recall that $(E_{i,j})$ is the set of matrix units of $\M_{\,n}$, 
 and set $ M_j =\langle P_j\rangle\subset \M_{\,n}$, so that $M_j$ is the space of matrices with zeros
 in the $j^{\rm th}$ column. By Lemma \ref{1.6}(ii), $M_j$ is a maximal left ideal in ${\mathbb M}_{\,n}$,   and so $M_j+L$ is a  maximal left ideal in $A$. 
By hypothesis, $M_j+L$  is finitely generated in $A$, and so there is a finite subset $S_j$ in $L$ such that  $L= AP_j +\langle S_j\rangle$.

Set $S = S_1\cup \cdots\cup S_n$, a finite subset of $L$. Then  
$$
L\subset  AP_j +\langle S\rangle\quad(j\in\N_n)\,.
$$ 
By enlarging $S$, if necessary, we may suppose
 that $S$ contains the difference between the products in $A$ and $\M_{\,n}$ of any two  matrix units   $E_{i,j}$ in $\M_{\,n}$, and so we have 
 $$
L\subset   {\mathbb M}_{\,n} \,\cdot\,P_j + LP_j +\langle S\rangle\quad(j\in\N_n)\,.
$$ 
Since $LP_j+ \langle S\rangle\subset L$  (where we note that $L$ is a right ideal) and since $(\M_{\,n}\,\cdot\,P_j) \cap L =\{0\}$, it follows that 
 $$
L =   LP_j +\langle S\rangle\quad(j\in\N_n)\,.
$$ 

Set $T = S\cup SP_1 \cup \cdots \cup SP_n$, a finite subset of $L$. Then we now have 
\begin{eqnarray*}
L &= & LP_1 + \langle S\rangle = (LP_2+ \langle S\rangle)P_1 + \langle S\rangle = LP_2P_1 + \langle S\cup SP_1\rangle\\
&=&  \cdots \;=\\
&=&  LP_n\cdots P_1 + \langle T\rangle = \langle T\rangle
\end{eqnarray*}
because the product of $P_n, \dots, P_1$ in  ${\mathbb M}_{\,n}$ is zero, and so the product in $A$ is an element of $T$.

This proves the result.
\end{proof}\s

\begin{lemma}\label{1.8}
Let $A$ be a unital algebra, and take ideals $I$, $J$, and $L$ in $A$  such  that $L = IJ +JI$.  Suppose that $I$ and $J$ are finitely generated as
 left ideals in $A$. Then $L$ is a finitely-generated   left ideal in $A$.
\end{lemma}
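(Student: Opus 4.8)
The plan is to show that each of the two-sided ideals $IJ$ and $JI$ is finitely generated as a \emph{left} ideal in $A$, and then to invoke the elementary fact that the sum of two finitely-generated left ideals is again finitely generated.

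Since $A$ is unital, fix generators and write $I = Aa_1 + \cdots + Aa_m$ and $J = Ab_1 + \cdots + Ab_n$ with $a_1,\dots,a_m \in I$ and $b_1,\dots,b_n \in J$ (recall \eqref{(1.1)} and that $A^{\sharp}=A$ here). I claim that
$$
IJ = \langle a_ib_j : i\in\N_m,\ j\in\N_n\rangle .
$$
The inclusion $\supseteq$ is clear: each $a_ib_j$ lies in $IJ$, and $IJ$ is a left ideal (indeed a two-sided ideal, since $I$ and $J$ are ideals). For $\subseteq$, a generic element of $IJ$ is a finite sum of products $xy$ with $x\in I$ and $y\in J$, so it suffices to treat one such product. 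Writing $y = \sum_j d_jb_j$ with $d_j\in A$, we get $xy = \sum_j (xd_j)b_j$; here the key point is that $xd_j \in I$, because $I$ is a \emph{right} ideal, so we may write $xd_j = \sum_i c_{ij}a_i$ with $c_{ij}\in A$, and then $xy = \sum_{i,j} c_{ij}(a_ib_j) \in \langle a_ib_j\rangle$, as required.

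Running the symmetric argument, with the roles of $I$ and $J$ interchanged, gives $JI = \langle b_ja_i : i\in\N_m,\ j\in\N_n\rangle$. Hence
$$
L = IJ + JI = \langle a_ib_j,\ b_ja_i : i\in\N_m,\ j\in\N_n\rangle ,
$$
a left ideal generated by at most $2mn$ elements; here we use that $\langle S_1\rangle + \langle S_2\rangle = \langle S_1\cup S_2\rangle$ for finite sets $S_1,S_2$, which is immediate from the description of $\langle\,\cdot\,\rangle$ given in the Introduction.

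As for obstacles, there is essentially only one step that requires thought: the reduction $xy = \sum_j(xd_j)b_j$ with each $xd_j\in I$. One must resist expanding $x$ in terms of the $a_i$ first (the generators $a_i$ cannot be simplified further, and this route stalls); instead one expands the right-hand factor $y$ and absorbs the resulting coefficients into the left factor, which is legitimate precisely because $I$ is a two-sided ideal. Everything else is routine bookkeeping.
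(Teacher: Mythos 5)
Your proposal is correct and follows essentially the same route as the paper's proof: expand the right-hand factor in terms of the generators of $J$, use that $I$ is a right ideal to absorb the resulting coefficients into $I$, re-expand in terms of the generators of $I$, and conclude that the finite set of products $a_ib_j$ (respectively $b_ja_i$) generates $IJ$ (respectively $JI$) as a left ideal, so that $L$ is generated by their union. The only cosmetic difference is that you assert equality $IJ=\langle a_ib_j\rangle$ where the paper records only the inclusion $IJ\subset\langle a_ib_j\rangle$, which is all that is needed.
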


\begin{proof} 
 Suppose that $I= Ax_1 + \cdots +Ax_m$ and $J= Ay_1+ \cdots +Ay_n$, where $x_1,\dots,x_m \in I$ and $y_1,\dots, y_n\in J$. 
 Then  $$x_iy_j \in IJ\subset L\quad(i\in\N_m, j\in\N_n)\,.
 $$
Take $x\in I$ and $y \in J$.  Then $y=\sum_{j=1}^n a_jy_j$ for some $a_1,\dots, a_n \in A$.   For each $j\in \N_n$, we have  $xa_j \in I$, and so 
$$
xa_j = \sum_{i=1}^m b_{i,j}x_i 
$$
for some $b_{1,j}, \dots, b_{m,j} \in A$. Hence
$$
xy = \sum_{i=1}^m\sum_{j=1}^n b_{i,j}x_iy_j \in  \langle S_1\rangle\,,
$$
where $S_1$ is the finite set $\{x_iy_j : i\in \N_m,j\in\N_n\}$.  It follows that $IJ\subset \langle S_1\rangle$.
Similarly, we see that $JI\subset \langle S_2\rangle$, where $S_2$ is the finite set $\{y_jx_i : i\in \N_m,j\in\N_n\}$.

It follows that $L$ is generated by the finite set $S_1\cup S_2$.
\end{proof}\s
 
 \begin{lemma}\label{1.9}
 Let $A$ be a unital algebra, and let $L$ be an ideal in $A$ of finite codimension such that $A/L$ is semisimple.  
 Suppose that each maximal left ideal in $A$ that contains $L$ is finitely generated in $A$. Then $L$ is finitely generated  as a left ideal in $A$.
 \end{lemma}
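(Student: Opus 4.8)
The plan is to reduce to Lemmas \ref{1.7} and \ref{1.8} via the Wedderburn structure of $A/L$. Since $L$ has finite codimension and $A/L$ is semisimple, $A/L$ is a finite-dimensional semisimple complex algebra; as the only finite-dimensional complex division algebra is $\C$ itself, the Artin--Wedderburn theorem gives an algebra isomorphism $A/L\cong \M_{n_1}\oplus\cdots\oplus\M_{n_k}$ for some $k\in\N$ and $n_1,\dots,n_k\in\N$. Let $\pi\colon A\to A/L$ be the quotient homomorphism and, for $i\in\N_k$, let $L_i=\pi^{-1}\bigl(\bigoplus_{j\neq i}\M_{n_j}\bigr)$. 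Each $L_i$ is an ideal of $A$ containing $L$, with $A/L_i\cong\M_{n_i}$, and $L_1\cap\cdots\cap L_k=L$ because the corresponding intersection of coordinate subspaces in $A/L$ is $\{0\}$.

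First I would show that each $L_i$ is finitely generated as a left ideal in $A$. Every maximal left ideal of $A$ that contains $L_i$ also contains $L$, and hence is finitely generated by hypothesis; since $A/L_i\cong\M_{n_i}$, Lemma \ref{1.7} applies and yields that $L_i$ is a finitely-generated left ideal.

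Next I would assemble $L$ from the $L_i$ by an induction using comaximality. For $j\in\N_k$ put $K_j=L_1\cap\cdots\cap L_j$, so that $K_1=L_1$ and $K_k=L$, and each $K_j$ is a two-sided ideal of $A$ as an intersection of such. Passing to $A/L$, the image of $K_j$ is the sum of the matrix factors $\M_{n_{j+1}},\dots,\M_{n_k}$ and the image of $L_{j+1}$ is the sum of all the factors except $\M_{n_{j+1}}$; these sum to all of $A/L$, so $K_j+L_{j+1}=A$. For comaximal two-sided ideals $I$ and $J$ in a unital algebra one has $I\cap J=IJ+JI$: the inclusion $IJ+JI\subset I\cap J$ is clear, and, writing $e_A=u+v$ with $u\in I$ and $v\in J$, every $x\in I\cap J$ satisfies $x=xu+xv\in JI+IJ$. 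Hence $K_{j+1}=K_j\cap L_{j+1}=K_jL_{j+1}+L_{j+1}K_j$, where $K_j$ is finitely generated as a left ideal by the inductive hypothesis and $L_{j+1}$ by the first step, so Lemma \ref{1.8} shows that $K_{j+1}$ is a finitely-generated left ideal. After $k-1$ steps we reach $K_k=L$, which is therefore a finitely-generated left ideal in $A$.

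The genuinely routine points — the Wedderburn form of $A/L$, the identification of the $L_i$ and of their intersection, and the identity $I\cap J=IJ+JI$ for comaximal ideals — need only short verifications. The one place to be careful is the bookkeeping in the induction: at each stage one must check both that $K_j$ is a two-sided ideal (so that Lemma \ref{1.8} is applicable) and that $K_j$ and $L_{j+1}$ really are comaximal (so that the intersection identity is applicable), and both of these follow cleanly from the coordinate description inside $A/L$. I therefore do not expect a serious obstacle.
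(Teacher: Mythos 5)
Your proof is correct and follows essentially the same route as the paper: Wedderburn's theorem, Lemma \ref{1.7} applied to the ideals with quotient a single matrix block, and then an iterated use of Lemma \ref{1.8} via the identity $I\cap J=IJ+JI$. The only cosmetic difference is that you derive that identity from comaximality $K_j+L_{j+1}=A$ with $e_A=u+v$, whereas the paper decomposes the identity explicitly as $p_1+\cdots+p_k+x_0$; your bookkeeping of the induction is, if anything, slightly more careful than the paper's ``repeating this argument finitely many times''.
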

 
 \begin{proof}  By Wedderburn's theorem   \cite[Theorem 1.5.9]{D}, we have $A/L = {\mathbb M}_{\,n_1}\oplus \cdots \oplus {\mathbb M}_{\,n_k}$ 
for some $n_1,\dots,n_k \in \N$.  For $j\in \N_k$, set 
 $$
 I_j = \bigoplus \{{\mathbb M}_{\,n_i} : i\in\N_k,\, i\neq j\}+ L\,,
 $$
 so that $I_j$ is an ideal in $A$ with $A/I_j = \M_{\,n_j}$. By Lemma \ref{1.7}, each $I_j$  is a finitely-generated left ideal in $A$.  

Take $j\in \N_k$.  The identity matrix in ${\mathbb M}_{\,n_j}$  is now denoted by $p_j$, and so the identity of $A$ has the form 
$p_1+ \cdots+ p_k+ x_0$ for some $x_0\in L$.

Take $j_1,j_2 \in\N_k$ with $j_1\neq j_2$.  Clearly $I_{j_1}I_{j_2}+ I_{j_2}I_{j_1} \subset  I_{j_1}\cap I_{j_2}$. Now take 
$x \in  I_{j_1}\cap I_{j_2}$. We have 
$$
x = (p_1+ \cdots+ p_k+ x_0)x \in I_{j_1}I_{j_2}+ I_{j_2}I_{j_1}
$$
because each $p_i$ for $i\in\N_n$ belongs to either $I_{j_1}$ or to $I_{j_2}$ (or to both)  and  $x_0 \in I_{j_1}\cap I_{j_2}$.  Thus 
$I_{j_1}\cap I_{j_2}\subset I_{j_1}I_{j_2}+ I_{j_2}I_{j_1}$. We have shown that  $I_{j_1}I_{j_2}+ I_{j_2}I_{j_1} =  I_{j_1}\cap I_{j_2}$.  By Lemma \ref{1.8},
$I_{j_1}\cap I_{j_2}$ is a finitely-generated   left ideal in $A$.

By repeating  this argument finitely many times, we see that the ideal $I_{1}\cap \cdots \cap I_{n}$ is a finitely generated  left ideal in $A$.
 However, $I_{1}\cap \cdots \cap I_{n} = L$, and so $L$ is finitely generated  as a left ideal in $A$.
 \end{proof}\s
 
  \begin{lemma}\label{1.10}
 Let $A$ be a unital algebra, and let $K$ be an ideal in $A$ of finite codimension.
 Suppose that each maximal left ideal in $A$ that contains $K$ is finitely generated in $A$. Then $K$ is finitely generated  as a left ideal in $A$.
 \end{lemma}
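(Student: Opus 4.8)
The plan is to peel off the radical of $A/K$, reduce to the semisimple situation treated in Lemma~\ref{1.9}, and then climb back down to $K$. Since $K$ has finite codimension, $A/K$ is a finite-dimensional algebra, so $J(A/K)$ is nilpotent and $(A/K)/J(A/K)$ is semisimple; let $L$ be the inverse image in $A$ of $J(A/K)$. Then $L$ is an ideal of $A$ with $K\subset L$, with $A/L\cong(A/K)/J(A/K)$ semisimple and finite-dimensional, and with $L^m\subset K$ for some $m\in\N$. Every maximal left ideal of $A$ that contains $L$ also contains $K$, hence is finitely generated by hypothesis, so Lemma~\ref{1.9} applies and shows that $L$ is finitely generated as a left ideal in $A$; this is the one place the hypothesis on maximal left ideals is used, and it is the heart of the proof.

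One cannot conclude immediately that $K$ is finitely generated from the fact that $K\subset L$ with $L$ finitely generated, since a left ideal contained in a finitely generated left ideal of a non-Noetherian algebra need not be finitely generated. The key point is that $K$ is squeezed between $L^m$ and $L$, that $L/L^m$ is finite-dimensional, and that $L^m$ is itself finitely generated as a left ideal. The last fact follows by iterating Lemma~\ref{1.8}: apply it first with $I=J=L$ to see that $L^2$ is finitely generated, and then with $I=L^j$ and $J=L$ (two-sided ideals, finitely generated as left ideals) to pass from $L^j$ to $L^{j+1}$, using that $L^jL=LL^j=L^{j+1}$ so that Lemma~\ref{1.8} returns $L^{j+1}$ itself. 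For the dimension statement, each $L^j/L^{j+1}$ is annihilated on the left by $L$ (because $LL^j\subset L^{j+1}$), hence is a module over the finite-dimensional algebra $A/L$, and it is finitely generated over $A$ because $L^j$ is; so it is finite-dimensional over $\C$, and summing over $j=1,\dots,m-1$ gives $\dim_{\C}L/L^m<\infty$, whence $\dim_{\C}K/L^m<\infty$ since $L^m\subset K\subset L$.

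To finish, choose $z_1,\dots,z_t\in K$ whose cosets form a basis of $K/L^m$. Since $L^m\subset K$ and $K$ is a left ideal, $L^m+Az_1+\cdots+Az_t\subset K$, while $K=L^m+\C z_1+\cdots+\C z_t\subset L^m+Az_1+\cdots+Az_t$; hence $K=L^m+Az_1+\cdots+Az_t$, which is finitely generated as a left ideal because $L^m$ is. The conceptual obstacle is exactly the realization that one must descend to $L^m$ rather than compare $K$ with $L$ directly; beyond that and the appeal to Lemma~\ref{1.9}, I expect the only work to be the routine checks that $L$ and each $L^j$ are two-sided ideals of finite codimension so that Lemmas~\ref{1.9} and~\ref{1.8} apply as stated, and that the equalities $L^jL=LL^j=L^{j+1}$ hold.
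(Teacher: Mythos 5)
Your proof is correct, and its first half---passing to the preimage $L$ of $J(A/K)$, noting that $A/L$ is finite-dimensional and semisimple, that every maximal left ideal containing $L$ contains $K$, and invoking Lemma~\ref{1.9}---is exactly the paper's opening move (the paper phrases it via Wedderburn's principal theorem, writing $A/K=B\oplus R$ and setting $L=R+K$). The descent from $L$ to $K$ is where you genuinely diverge. The paper exploits the splitting provided by Wedderburn's principal theorem: it chooses generators of $L$ of the form $r_i+s_i$ with $r_i\in R$ and $s_i\in K$, enlarges the finite set $S$ of the $s_i$ so that it absorbs the products $Br_i$ and the words $r_{i_1}\cdots r_{i_k}$, and then iterates the inclusion $K\subset\sum_i Kr_i+\langle S\rangle$ until the nilpotency of $R$ kills the first term, giving $K=\langle S\rangle$ directly. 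You never lift the radical: you show each power $L^j$ is finitely generated by iterating Lemma~\ref{1.8} (with $I=L^j$, $J=L$, so that $IJ+JI=L^{j+1}$), deduce that each $L^j/L^{j+1}$ is a finitely generated module over the finite-dimensional algebra $A/L$, hence that $\dim_\C K/L^m<\infty$, and then write $K=L^m+Az_1+\cdots+Az_t$ for a finite lift of a basis of $K/L^m$. Your route buys independence from Wedderburn's principal theorem (only nilpotency of $J(A/K)$ and semisimplicity of the quotient are used) and replaces the paper's somewhat delicate generator bookkeeping with a clean module-theoretic dimension count, at the cost of a second pass through Lemma~\ref{1.8} and a slightly longer argument. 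One small remark: Lemma~\ref{1.8} as stated needs no finite-codimension hypothesis, so the only such check actually required is the immediate one for $L$ when applying Lemma~\ref{1.9}.
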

 
  \begin{proof}  By Wedderburn's principal theorem   \cite[Corollary 1.5.19]{D}, $A/K = B\oplus R$ for a subalgebra $B$ 
which is a direct sum of full matrix algebras and an ideal $R$ which is the radical of $A/K$. We identify $A$ with $B\oplus R\oplus K$ as a linear space. 
The  ideal $R$ is nilpotent in $A/K$, say  $R^n =\{0\}$ in $A/K$, and so $R^n\subset K$ in $A$.

Set $L = R + K$. Then $L$ is an ideal of $A$ such that $A/L$ is semisimple, and so, by Lemma \ref{1.9}, $L$ 
is finitely generated  as a left ideal in $A$.  Thus there exist $r_1,\dots, r_m \in R$ and a finite subset $S$ of $K$ such that 
 $$
 K \subset Ar_1 + \cdots + Ar_m + \langle S \rangle\,.
 $$
 
By enlarging $S$,  if necessary, we may suppose $Br_1, \dots, Br_m \subset S$  and that $S$ contains any product $r_{i_1}\cdots r_{i_k}$ for any 
$i_1,\dots, i_k \in\N_m$ and $k\in\N_n$; the enlarged set $S$ is still finite. Thus we see that 
$$
K\subset \sum_{i=1}^n K r_i + \langle S \rangle\,.
$$ 
But now 
$$
K\subset  \sum_{i_1,i_2=1}^m K r_{i_1}r_{i_2} + \langle S \rangle \subset \cdots \subset 
\sum_{i_1,\dots,i_n=1}^m K r_{i_1}\cdots r_{i_n} + \langle S \rangle = \langle S \rangle\,.
$$
Thus $K$ is finitely generated  as a left ideal in $A$.
\end{proof}\s
  
 \begin{theorem}   Let $A$ be a unital, infinite-dimensional   algebra, and suppose that some element of $\,{\mathfrak M}_\infty(A)$ has finite codimension   in  $A$.
 Then one of the maximal left ideals in $A$ is not finitely generated.
  \end{theorem}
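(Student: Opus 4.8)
The plan is to argue by contradiction: assuming that \emph{every} maximal left ideal in $A$ is finitely generated, I shall show that the given element $M$ of ${\mathfrak M}_\infty(A)$ is itself finitely generated, which is absurd since $M \in {\mathfrak I}_\infty(A)$ by the definition of ${\mathfrak M}_\infty(A)$.

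The first step is to replace the left ideal $M$ by a suitable two-sided ideal sitting inside it. Since $M$ has finite codimension in $A$, the left $A$-module $A/M$ is finite-dimensional over $\C$, and the kernel of the associated representation of $A$ on $A/M$ is the set $K = \{a \in A : aA \subseteq M\}$. This $K$ is a two-sided ideal in $A$; it has finite codimension, being the kernel of a homomorphism from $A$ into the finite-dimensional algebra of all linear operators on $A/M$; and, as $A$ is unital, $K = K\cdot 1 \subseteq KA \subseteq M$, so that $K \subseteq M$.

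Now every maximal left ideal in $A$ that contains $K$ is finitely generated, by our standing assumption, and so Lemma \ref{1.10} applies to the finite-codimensional ideal $K$ and tells us that $K$ is finitely generated as a left ideal in $A$, say $K = \langle k_1, \dots, k_s\rangle$. Since $A/K$ is finite-dimensional, the left ideal $M/K$ of $A/K$ is a finite-dimensional linear space, so we may choose $m_1, \dots, m_r \in M$ whose cosets span $M/K$; then $M = \langle m_1, \dots, m_r\rangle + K = \langle m_1, \dots, m_r, k_1, \dots, k_s\rangle$ is finitely generated. This is the desired contradiction, and it completes the proof.

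In effect the theorem is a corollary of Lemma \ref{1.10}. The only points requiring attention beyond quoting that lemma are the passage from the finite-codimensional \emph{left} ideal $M$ to a finite-codimensional \emph{two-sided} ideal $K$ contained in $M$, and the routine verification that finite generation of $K$ together with finite-dimensionality of $A/K$ forces $M$ to be finitely generated. I do not expect a genuine obstacle here: the substantive work has already been carried out in Lemmas \ref{1.6}--\ref{1.10}, notably in the matrix-unit and nilpotent-radical sweeping computations behind Lemmas \ref{1.7} and \ref{1.10}.
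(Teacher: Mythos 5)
Your proof is correct and is essentially the paper's own argument: both pass from $M$ to the finite-codimensional two-sided ideal $K=\{a\in A: aA\subset M\}\subset M$, invoke Lemma \ref{1.10} under the assumption that all maximal left ideals are finitely generated, and conclude that $M$ is finitely generated (since $M/K$ is finite-dimensional), contradicting $M\in{\mathfrak M}_\infty(A)$. Your write-up even spells out the final step and the inclusion $K\subset M$ slightly more carefully than the paper does.
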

  
 \begin{proof} We suppose that ${\mathfrak M}_\infty(A)$ is such that  $\codim M =n$, where $n\in\N$, and we identify $A/M$ with $\C^{\,n}$ as a linear space.
 The identity of $A$ is denoted by $e$. 
 
For each $a \in A$, set $T_a(b + M) = ab +M\,\;(b\in A)$. Then 
$$
\theta : a\mapsto T_a\,,\,\;A \to {\mathbb M}_{\,n}\,,
$$
 is a unital homomorphism,  and its image $\theta(A)$ is a unital subalgebra of ${\mathbb M}_{\,n}$.  Define 
 $$
 K =\ker \theta =\{ a \in A : aA \subset M\}\,,
 $$
 so that $K$ is an ideal in $A$ of codimension at most $n^2$.  For each 
$a \in A$, we have $a=ae \in M$, and so $K\subset M$.  Clearly $M/K$ is a left ideal in $A/K$.

Assume towards a contradiction that each maximal left ideal in $A$ is finitely generated.  By Lemma \ref{1.10}, $K$ is finitely generated  as a left ideal
in $A$, and so $M$ is also finitely generated  as a left ideal in $A$ because $M/K$ is a finite-dimensional space, a contradiction of the hypothesis.

Thus $A$ contains a maximal left ideal that is not finitely generated.
\end{proof}\s

We now introduce a slightly complicated algebraic condition.\s

\begin{theorem} \label{2.3}
Let $A$ be an algebra.  Suppose that $I$ is a  left ideal in $A$ with the property that
there exist $a,b \in A\setminus I$ such that $ba \in I $ and $Ia\subset I$.  Then $I$ does not belong to  either ${\frak M}_\infty(A)$ or ${\frak U}_\infty(A)$.
\end{theorem}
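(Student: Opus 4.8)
The plan is a proof by contradiction. Suppose that $I$ is a maximal element of the family ${\frak I}_\infty(A)$; the argument for a maximal element of ${\frak U}_\infty(A)$ is word-for-word the same, with ``countably generated'' in place of ``finitely generated'' throughout. I would first reduce to the case that $A$ is unital, by passing to $A^{\sharp}$; the one technical point is that the auxiliary left ideal $L$ introduced below must then be replaced by $L\cap A$, and one checks that $L$ exceeds $L\cap A$ by at most one dimension, so that generation is unaffected. So assume that $A$ has an identity $e$.

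The idea is to invoke the maximality of $I$ \emph{twice}. First, since $a=ea\in Aa$ but $a\notin I$, the left ideal $I+Aa$ properly contains $I$ and hence is finitely generated; choosing generators and subtracting off their $Aa$-components yields finitely many elements $x_1,\dots,x_q\in I$ with $I+Aa=\langle x_1,\dots,x_q\rangle + Aa$. Now for $y\in I\subseteq \langle x_1,\dots,x_q\rangle + Aa$, write $y=z+ca$ with $z\in\langle x_1,\dots,x_q\rangle$ and $c\in A$; since $ca=y-z\in I$, the element $c$ lies in the transporter
$$L:=\{\,c\in A : ca\in I\,\}\,,$$
which is at once seen to be a left ideal of $A$. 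This gives $I=\langle x_1,\dots,x_q\rangle + La$, the reverse inclusion being clear. Now the two hypotheses come into play: $Ia\subseteq I$ says precisely that $I\subseteq L$, while $ba\in I$ together with $b\notin I$ gives $b\in L\setminus I$; hence $L$ \emph{strictly} contains $I$. The second application of maximality therefore shows that $L$ is finitely generated, say $L=\langle w_1,\dots,w_r\rangle$, whence $La=\langle w_1a,\dots,w_ra\rangle$ and so $I=\langle x_1,\dots,x_q,w_1a,\dots,w_ra\rangle$ is finitely generated --- contradicting $I\in{\frak I}_\infty(A)$. Thus $I\notin{\frak M}_\infty(A)$; the parallel argument shows that $I$ is not a maximal element of ${\frak U}_\infty(A)$ (so that $I\notin{\frak N}_\infty(A)$ when $A$ is a Banach algebra).

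The only step carrying real content is the second use of maximality: nothing forces $I$ itself to lie inside an obviously larger left ideal, so one has to build one, and the transporter $L=\{c:ca\in I\}$ is exactly the object that does the job --- it both recaptures $I$ through $I=\langle x_1,\dots,x_q\rangle+La$ and is forced to strictly contain $I$ by the hypotheses ($I\subseteq L$ because $Ia\subseteq I$, and $L\neq I$ because $b\in L\setminus I$). Everything else --- the generator-chasing that produces the $x_i$, the reduction to the unital case, and the countable analogue --- is routine once $L$ is identified.
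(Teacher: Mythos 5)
Your argument is correct and is essentially the paper's own proof: the same two applications of maximality, first to $I+A^{\sharp}a$ (producing the generators $x_1,\dots,x_q\in I$) and then to the transporter $\{c: ca\in I\}$, which strictly contains $I$ because $Ia\subset I$ and $b$ lies in it, yielding the decomposition $I=\langle x_1,\dots,x_q\rangle+ \{c: ca\in I\}\,a$ and the contradiction; the countably generated case is handled by the same substitution the paper intends. The only cosmetic difference is that the paper needs no unitization step, since its convention $\langle S\rangle=\sum A^{\sharp}s_i$ already allows $A^{\sharp}$ coefficients, so your auxiliary passage to $A^{\sharp}$ and the $L\cap A$ adjustment are just a repackaging of the same argument.
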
 

\begin{proof}  Assume towards a contradiction that $I\in {\frak M}_\infty(A)$.  We note that the   every left ideal $J$ that properly contains 
$I$ is finitely generated.

Take $a$ and $b$ as specified.

Consider the left ideal $A^{\sharp}a+ I$ of $A$. Since $a\notin I$, the left ideal  $A^{\sharp}a+ I$ properly contains $I$, and so 
is finitely generated, say 
$$
 A^{\sharp}a+ I=  \langle b_1,\dots,b_m\rangle\,,
$$ 
where $b_1,\dots,b_m \in A^{\sharp}a+ I$. Each element $b_i$ has the form $a_ia +u_i$,
 where $a_i \in A^{\sharp}$ and $u_i\in I$, and so  $A^{\sharp}a+ I = \langle a, u_1,\dots,u_m\rangle$.

Define $J= \langle   u_1,\dots,u_m\rangle\subset I$, so that $A^{\sharp}a + I = A^{\sharp}a + J$, 
and define  $K = \{c\in A : ca \in I\}$.   Then $K$ is a left ideal in $A$, and $I \subset K$ because $Ia\subset I$.
Further, we claim that $I\subset Ka +J$. For take $x \in I$. Since $I\subset A^{\sharp}a +J$, 
there exist $c \in A^{\sharp}$ and $j\in J$ with $x =ca+j$. Then $ca= x-j\in I$, and so $c\in K$, giving the claim.
Since $Ka+ J \subset I$, it  follows that $Ka+ J =I$.

Since $b\in K\setminus I$, we have $I \subsetneq K$, and so $K$ is finitely generated, say $K= \langle c_1,\dots,c_n\rangle$, where $c_1,\dots,c_n \in A$.
But now $$I= \langle c_1a,\dots,c_na, u_1,\dots,u_m\rangle\,,
$$
 and so $I$ is finitely generated, a contradiction.  Thus $I \not\in {\frak M}_\infty(A)$.
\end{proof} \s

\begin{theorem} \label{3.1}
Let $A$ be a Banach algebra.  Suppose that $I$ is an ideal in $A$ and that 
$I\in{\frak M}_\infty(A)$. Then either $I$ is a maximal modular  ideal of co\-dimension $1$ in $A$ or $I=A$.
\end{theorem}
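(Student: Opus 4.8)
The plan is to pass to the quotient $B = A/I$ and to identify it with $\C$. We may assume $I \neq A$, since otherwise there is nothing to prove. Because $I$ lies in ${\frak M}_\infty(A)$, this family is non-empty, and hence $A$ is infinite dimensional — in a finite-dimensional algebra every left ideal is finitely generated, so ${\frak I}_\infty(A)$, and a fortiori ${\frak M}_\infty(A)$, would be empty. Thus Corollary \ref{2.1b} applies and shows that $I$ is closed, so $B = A/I$, with the quotient norm, is a non-zero Banach algebra. Let $\pi : A \to B$ be the quotient map.

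First I would record two properties of $B$. (a) \emph{$B$ has no non-zero zero-divisors.} Since $I$ is a two-sided ideal, $Ia \subset I$ for every $a \in A$; so the contrapositive of Theorem \ref{2.3}, applied to the left ideal $I \in {\frak M}_\infty(A)$, says precisely that there are no $a, b \in A \setminus I$ with $ba \in I$, which on passing to $B$ is the assertion (a). (b) \emph{Every left ideal of $B$ is finitely generated.} Indeed, if $\bar J$ is a non-zero left ideal of $B$, then $J := \pi^{-1}(\bar J)$ is a left ideal of $A$ properly containing $I = \pi^{-1}(\{0\})$; by maximality of $I$ in $({\frak I}_\infty(A),\subset)$, the ideal $J$ is finitely generated in $A$, and since the image of a finitely-generated left ideal under a surjective algebra homomorphism is again finitely generated, $\bar J = \pi(J)$ is finitely generated in $B$. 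The ideal $\{0\}$ is of course finitely generated.

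Now I would invoke the non-commutative Grauert--Remmert theorem. By (b), every closed left ideal of the Banach algebra $B$ is finitely generated, so Theorem \ref{1.2} gives that $B$ is finite dimensional. By (a), $B$ is then a non-zero, finite-dimensional algebra over $\C$ without zero-divisors: for any $0 \neq a \in B$ the operators of left and right multiplication by $a$ are injective, hence bijective, which yields first a two-sided identity for $B$ and then a two-sided inverse of $a$, so $B$ is a division algebra. Since $\C$ is algebraically closed (equivalently, by the Gel'fand--Mazur theorem), $B = \C 1_B$, and thus $\codim I = \dim (A/I) = 1$. As $A/I \cong \C$ is unital, $I$ is modular, and, being proper of codimension $1$, it is a maximal modular ideal of $A$; this is the desired conclusion.

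The only point that requires care is the precise use of Theorem \ref{2.3}: one has to observe that the two-sidedness of $I$ makes the hypothesis ``$Ia \subset I$'' automatic, and that what Theorem \ref{2.3} then delivers is merely the absence of zero-divisors in $A/I$, not that $A/I$ is a division algebra. The division-algebra property — and hence the collapse to $\C$ — emerges only after the finite-dimensionality reduction supplied by Theorem \ref{1.2}; without that reduction a Banach algebra with no zero-divisors need not be one-dimensional (for instance, the disc algebra). The remaining ingredients — the quotient bookkeeping, the stability of finite generation under surjective homomorphisms, and the classical fact that a finite-dimensional complex division algebra is $\C$ — are routine.
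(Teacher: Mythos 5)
Your argument is correct, and its skeleton is the same as the paper's: both note via Corollary \ref{2.1b} that $I$ is closed, observe that every non-zero left ideal of $A/I$ is the image of a left ideal of $A$ properly containing $I$ and hence is finitely generated, apply Theorem \ref{1.2} to conclude that $A/I$ is finite dimensional, and then use Theorem \ref{2.3} to force $\dim A/I=1$. The genuine divergence is in the endgame. The paper runs a case analysis on the finite-dimensional quotient: if $A/I$ is semisimple, Wedderburn's theorem gives a direct sum of matrix algebras, and $\dim A/I\geq 2$ would yield idempotents $p,q$ with $pq=0$, hence $a,b\in A\setminus I$ with $ab\in I$, contradicting Theorem \ref{2.3}; if $A/I$ is not semisimple, a nilpotent element of its radical gives $a\notin I$ with $a^2\in I$, again a contradiction. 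You instead distil Theorem \ref{2.3} into the single statement that $A/I$ has no non-zero zero-divisors (the hypothesis $Ia\subset I$ being automatic because $I$ is two-sided), and finish with the elementary fact that a non-zero, finite-dimensional complex algebra without zero-divisors is a division algebra, hence $\C$ by the Gel'fand--Mazur theorem or algebraic closedness. Your route avoids Wedderburn's theorem and the semisimple/non-semisimple split, at the price of the short linear-algebra argument producing an identity and inverses; the paper's route makes the concrete obstructions (orthogonal idempotents, nilpotents) explicit. Your preliminary observation that $A$ must be infinite dimensional, so that Corollary \ref{2.1b} applies as stated, is a point the paper passes over silently and is a welcome addition.
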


\begin{proof}  We consider the case where  $I\neq A$.   By Corollary \ref{2.1b}, $I$ is closed.

Since $I$ is a closed  ideal in $A$, the space $A/I$ is a Banach algebra.  Each non-zero, left ideal in $A/I$ has the form $J/I$, where $J$ is a left 
 ideal in $A$ with  $J\supsetneq I$. Since $J$ is finitely generated  in $A$ as a left ideal, the ideal $J/I$ is a finitely-generated left ideal 
 in $A/I$,  and so  ${\frak I}_\infty(A/I)$ is  empty. By Theorem \ref{1.2},  $A/I$ is a finite-dimensional algebra.
 
Suppose that $A/I$ is semisimple.  Then $A/I$ is a finite direct sum of matrix algebras.  Assume towards a contradiction that  $\dim A/I\geq 2$. Then there 
 are idempotents $p$ and $q$ in $A/I$ with $pq=0$, and so there are $a,b \in A\setminus  I$ with $ab \in I$. Since $Ia \subset I$,
 it is immediate from Theorem \ref{2.3}  that $I\notin{\frak M}_\infty(A)$, a contradiction.  
Thus $\dim A/I=1$, and so $I$ is a maximal modular  ideal of co\-dimension $1$ in $A$.

 Assume towards a contradiction that $A/I$ is not semisimple. Since the radical of $A/I$ is finite dimensional, there exists $a\in A\setminus I$ with $a^2 \in I$, again  
a contradiction of Theorem \ref{2.3}.  

 Hence $I$ is a maximal modular ideal of codimension $1$ in $A$.\end{proof}
\medskip

\section{The commutative case}
\subsection{Finitely-generated maximal ideals} We shall now consider commutative Banach algebras.\s

\begin{theorem} \label{3.2}
Let $A$ be a commutative, unital Banach algebra.  Suppose that each   maximal   ideal in $A$ is {\CG}.
Then $A$ is finite dimensional. 
\end{theorem}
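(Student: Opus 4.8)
The plan is to deduce this from the machinery already developed, reducing the commutative case to a statement about the family $\mathfrak{M}_\infty(A)$ and then invoking the algebraic results of Section 2. First I would argue by contradiction: suppose $A$ is infinite dimensional. Since $A$ is commutative and unital, every maximal ideal is a maximal left ideal of codimension $1$, and conversely; the hypothesis that every maximal ideal is countably generated, combined with Corollary \ref{1.3b}, shows that every maximal ideal is in fact finitely generated (indeed singly generated, after the reduction below, since a finitely generated ideal of codimension $1$ is generated by the $a_i$ plus, if necessary, one more element --- but I only need finite generation). So I may assume every maximal ideal of $A$ is finitely generated.

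Next, since $A$ is infinite dimensional, Corollary \ref{2.1b} gives that $\mathfrak{M}_\infty(A) = \mathfrak{N}_\infty(A)$ is non-empty; pick $M \in \mathfrak{M}_\infty(A)$. Because $A$ is commutative, $M$ is an ideal, so Theorem \ref{3.1} applies: either $M = A$ (impossible, as $A \notin \mathfrak{I}_\infty(A)$ would fail only if $A$ itself were not finitely generated, but $A = \langle e \rangle$ is singly generated, contradiction) or $M$ is a maximal modular ideal of codimension $1$ in $A$. In the commutative unital setting "maximal modular ideal" just means "maximal ideal", so $M$ is a genuine maximal ideal of $A$. But by hypothesis (and the first paragraph) every maximal ideal of $A$ is finitely generated, whence $M \notin \mathfrak{I}_\infty(A)$, contradicting $M \in \mathfrak{M}_\infty(A) \subset \mathfrak{I}_\infty(A)$. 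This contradiction shows $A$ is finite dimensional.

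I expect the only real subtlety to be the passage from "countably generated" to "finitely generated" for the maximal ideals --- but this is exactly Corollary \ref{1.3b}, applied once we know maximal ideals are closed (which holds since a maximal modular ideal in a unital Banach algebra is closed, or alternatively since a codimension-$1$ subspace through which a nonzero character factors is closed). The rest is a matter of correctly matching the hypotheses of Theorem \ref{3.1} (it needs $I$ to be an ideal with $I \in \mathfrak{M}_\infty(A)$, both of which hold) and noting that in a commutative unital algebra the notions "maximal ideal" and "maximal modular ideal of codimension $1$" coincide. One should also double-check at the outset that $A \notin \mathfrak{I}_\infty(A)$, i.e.\ that $A$ as a left ideal over itself is finitely (singly) generated by its identity --- this is immediate but is used to exclude the case $I = A$ in Theorem \ref{3.1}.
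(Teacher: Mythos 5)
Your proposal is correct and follows essentially the same route as the paper: maximal ideals are closed, so Corollary \ref{1.3b} upgrades countable to finite generation; if $A$ were infinite dimensional, Corollary \ref{2.1b} would yield some $M\in{\frak M}_\infty(A)$, which by Theorem \ref{3.1} (using $A=\langle e_A\rangle$ to exclude $M=A$) would be a maximal ideal, contradicting its non-finite generation. The only difference is cosmetic: the paper assumes ${\frak M}_\infty(A)\neq\emptyset$ and concludes via Corollary \ref{2.1b} at the end, whereas you assume $A$ infinite dimensional at the outset.
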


\begin{proof} Since maximal ideals in $A$ are closed, each maximal ideal in  $A$ is finitely generated by Corollary  \ref{1.3b}.

Assume  towards a contradiction that ${\frak M}_\infty(A)\neq \emptyset$,  and then take $I\in{\frak M}_\infty(A)$.   Since $A$ is unital, with 
 identity $e_A$, say,  $A$ itself is singly generated by $e_A$, and so $I \neq A$. By  Theorem \ref{3.1}, $I$ is a maximal ideal in $A$. Since each   maximal  ideal is 
finitely generated, we have a contradiction, and so  ${\frak M}_\infty(A)= \emptyset$. By Corollary \ref{2.1b}, $A$ is finite dimensional.
\end{proof}\s

The above theorem  requires that $A$ be unital, and this condition cannot be dispensed with. For let $A$ be the commutative, radical algebra $L^1([0,1])$, with convolution multiplication, as 
in \cite[Definition 4.7.38]{D}; thus $A$ is the {\it Volterra algebra\/}.  Then  $A$ has no maximal ideals, but $A$ is infinite dimensional.

 Let $A$ be a commutative Banach algebra. It can happen that the sets ${\frak M}_\infty(A)$  has just one element, 
 whilst the maximal ideal space of $A$ is infinite. For example, let $A$ be the algebra $c$, so that $c$ is a   commutative, unital 
Banach algebra. A  maximal ideal  of $c$ of the form 
$$
M_k=\{(x_i) \in c : x_k=0\}\,,
$$
 where $k\in \N$, is generated by the sequence $(a_i)$, where 
$a_k=0$ and $a_i=1\,\;(i\neq k)$.   The only other maximal idea of $c$ is 
$$
c_{\,0} = \{(x_i) \in c : \lim_{i\to \infty} x_i=0\}\,,
$$
and this ideal is not finitely generated, and hence not {\CG}.\medskip

\subsection{The {\v S}ilov boundary} 
 We first recall some standard notation;  see \cite[Chapter 4]{D}, for example.   

Let $K$ be a non-empty, compact space. We write $C(K)$ for the  space of all continuous functions on  $K$ with the pointwise operations,  and  set
 $$
 \lv f \rv_K =\sup\{\lv f(x)\rv : x\in K\}\quad (f\in C(K))\,,
 $$
so that $(C(K), \lv \,\cdot\,\rv_K)$ is a commutative, unital  Banach algebra. A unital subalgebra $A$ of $C(K)$ that separates the points of $K$ and is a Banach
 algebra for some norm $\norm$ is a {\it Banach function algebra\/} on $K$; $A$ is a {\it uniform algebra\/} if it is closed in $(C(K), \lv \,\cdot\,\rv_K)$.  
 
A Banach function algebra $A$ on $K$ is {\it natural\/} if every character on $A$ has the form $\varepsilon_x: f\mapsto f(x)$ for some $x \in K$ or, 
equivalently, each maximal ideal of $A$ has the form $M_x=\{f\in A : f(x)=0\}$ for some $x \in K$.   

Let $A$ be a   commutative, unital Banach algebra, and let $\Phi_A$ be the character space of $A$ as in \cite{D},
 so that $\Phi_A$ is identified with the  maximal ideal space of $A$; the maximal ideal corresponding to a character $\varphi\in \Phi_A$ is
$M_\varphi = \ker \varphi$. It is standard that $\Phi_A$ is a  compact space in the relative weak-$*$ topology $\sigma(A',A)$
 and that the Gel'fand transformation  maps $A$ onto a  Banach function algebra $\widehat{A}$ which is  natural on $\Phi_A$.
 
  Let $A$ be a natural Banach function algebra on a compact space $K$. A closed subset $F$ of $K$ is a {\it peak set\/} if there exists $f \in A$ such that
 $\lv f(x)\rv =1\,\;(x\in F)$ and  $\lv f(y)\rv <1\,\;(y\in K\setminus F)$; in this case, $f$ {\it peaks\/} on $F$; a point $x\in K$ is a  {\it peak point\/} 
if $\{x\}$ is a peak set, and a {\it strong boundary point\/} if $\{x\}$  is an intersection of peak sets; the set of strong boundary points of $A$ is denoted 
by $S_0(A)$.  A countable intersection of peak sets  is always a peak set, and so, in the case where $K$ is metrizable, $S_0(A)$ is the set of peak points of $A$. 
(However even  a uniform algebra may have strong boundary points which are not peak points.)  A subset $S$ of $K$ is a {\it boundary\/} if $S\cap F\neq \emptyset$ 
for each peak set $F$ of $A$; the intersection of all the closed boundaries of $A$ is a closed boundary, called the    {\it {\v S}ilov boundary\/} and 
denoted by   $\Gamma(A)$ \cite[Definition 4.3.1(iv)]{D}; for $x\in K$, we have $x \in   \Gamma(A)$ if and only if, for each open neighbourhood $U$
 of $x$ in $K$, there exists $f \in A$ with $\lv f\rv_K > \lv f\rv_{K\setminus U}$  \cite[Theorem 4.3.5]{D}.
 
In the case where $A$ is a natural
 uniform algebra on $K$, the  {\it Choquet boundary\/}  of $A$ is defined in \cite[Definition 4.3.3]{D}; 
 by  \cite[Theorem 4.3.5]{D}, it coincides with the  set $S_0(A)$; by \cite[Proposition 4.3.4]{D}, $S_0(A)$
 is a boundary for $A$ and, by  \cite[Corollary 4.3.7(i)]{D}, $S_0(A)$ 
is a dense subset  of $\Gamma(A)$.

 Let $A$ be a commutative, unital Banach algebra. Then we define $\Gamma(A)$ to be $\Gamma(\widehat{A})$.

The following two results are essentially  Corollary 1.7 of \cite{FT}, where an analogous result for more general topological algebras is proved.
 \s

\begin{theorem} \label{3.3}
Let $A$ be a commutative, unital  Banach algebra.  Suppose that $\varphi \in  \Gamma(A)$ and  the  ideal $M_\varphi$ is 
{\CG}.  Then $\varphi$ is isolated in $\Phi_A$.
\end{theorem}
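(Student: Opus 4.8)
The plan is to reduce to a statement about a natural Banach function algebra, to extract a rigidity property from the finite generation of $M_\varphi$, and then to play this against the maximum-modulus character of the {\v S}ilov boundary. First, the reductions: since $M_\varphi$ is a maximal ideal of the unital {\CBA} $A$, it is closed; being countably generated, it is then finitely generated, by Corollary \ref{1.3b}, say $M_\varphi = \langle a_1,\dots,a_n\rangle$. Passing to the Gel'fand representation, we may assume that $A$ is a natural Banach function algebra on the compact space $K:= \Phi_A$, that $\varphi = \varepsilon_x$ for some $x\in \Gamma(A)$, and that $a_1(x) = \dots = a_n(x)=0$; if $A$ is finite dimensional, then $\Phi_A$ is finite and discrete and there is nothing to prove, so we assume that $A$ is infinite dimensional. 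Since $M_x$ is maximal and $A$ is natural, the hull of $M_x$ is $\{x\}$, so that $\{x\} = \{\psi\in K: a_1(\psi) = \dots = a_n(\psi)=0\}$, and the goal is to show that $\{x\}$ is open in $K$.

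Next I would extract a rigidity property from the open mapping theorem. The bounded linear surjection $(g_1,\dots,g_n)\mapsto g_1a_1+\dots+g_na_n$ of $A^n$ onto the Banach space $M_x$ is open, so there is a constant $C$ such that every $f\in A$ can be written as $f= f(x)e + g_1a_1+\dots +g_na_n$, where $e$ is the identity of $A$ and $\|g_i\|\le C\|f\|\,\;(i=1,\dots,n)$. Applying a character $\psi\in K$ gives $|f(\psi)-f(x)|\le C\|f\|\sum_{i=1}^n|a_i(\psi)|$, and so $\|\psi-x\|\le C\sum_{i=1}^n|a_i(\psi)|$ in the dual space $A'$. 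Thus, whenever a net $(\psi_\alpha)$ in $K$ converges to $x$, it converges to $x$ also in the norm of $A'$; equivalently, for each $\varepsilon>0$ there is a neighbourhood $U_\varepsilon$ of $x$ in $K$ such that $|f(\psi)-f(x)|\le \varepsilon$ for all $\psi\in U_\varepsilon$ and all $f$ in the closed unit ball of $A$.

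Now I would bring in the hypothesis $x\in\Gamma(A)$. Assume towards a contradiction that $\{x\}$ is not open. Given a neighbourhood $U$ of $x$ with $U\neq K$, there is $f\in A$ with $|f|_K>|f|_{K\setminus U}$; normalising (and, in the uniform-algebra case, replacing $f$ by a high power), we may arrange that $|f|_K=1$ and that $|f|_{K\setminus U}$ is as small as we wish, so that $|f|$ attains its maximum at a point of $\overline U$. Choosing $U\subset U_\varepsilon$ and using the rigidity above, we obtain $|f(x)|\ge 1-\varepsilon$; letting $U$ shrink to $x$ and $\varepsilon\to 0$ and assembling the resulting approximate peak functions in the usual way, we find that $x$ is a strong boundary point of $A$. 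At a strong boundary point one has $M_x = \overline{M_x^2}$: if $h$ peaks at $x$ with $h(x)=1$, then, for $f\in M_x$, the elements $f(e-h^k)$ lie in $M_x^2$ and tend to $f$. But $M_x^2$ is finitely generated (by the products $a_ia_j$) and its closure is the finitely generated ideal $M_x$, so $M_x^2$ is closed, by Theorem \ref{1.3}; hence $M_x=M_x^2$. A finitely-generated idempotent ideal of a unital commutative algebra is generated by an idempotent (a determinant argument), and so $M_x = Ae_0$ with $e_0^2=e_0$. Then $e_0(x)=0$, the hull of $Ae_0=M_x$ is $\{x\}$, and $e_0$ takes only the values $0$ and $1$, whence $e_0$ is the characteristic function of $K\setminus\{x\}$; since $e_0$ is continuous, $\{x\}$ is open --- a contradiction. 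Therefore $\varphi$ is isolated in $\Phi_A$.

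The step I expect to be the main obstacle is the implication `$x\in\Gamma(A)$ and $M_x$ finitely generated $\Rightarrow$ $x$ is a strong boundary point'. This is genuinely delicate: {\v S}ilov boundary points carrying a non-zero continuous point derivation do occur (for instance for algebras of the form $R(X)$), so the finite generation of $M_x$ --- used above through the open-mapping estimate --- really has to enter. A variant that sidesteps strong boundary points splits into the cases $M_x=M_x^2$ (disposed of directly by the idempotent argument, with no appeal to $\Gamma(A)$) and $M_x\neq M_x^2$, in which $A/\overline{M_x^2}$ is a finite-dimensional local algebra yielding a non-zero continuous point derivation at $x$ that must be excluded; in either formulation the comparison of $\|f\|$ with $|f|_K$ --- immediate for uniform algebras but needing care in general, perhaps via a reduction through the uniform closure --- is the remaining technical point.
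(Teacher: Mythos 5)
Your reductions are sound (quotienting by the radical to reach a natural Banach function algebra, Corollary \ref{1.3b} to pass from {\CG} to finitely generated, the open-mapping estimate $|f(\psi)-f(x)|\le C\Vert f\Vert\sum_{i=1}^n|a_i(\psi)|$, and the closing step $M_x=M_x^2\Rightarrow M_x=Ae_0$ with $e_0$ idempotent $\Rightarrow x$ isolated). The genuine gap is the pivotal implication ``$x\in\Gamma(A)$ plus the rigidity estimate $\Rightarrow x\in S_0(A)$''. Your rigidity inequality is stated for $f$ in the unit ball of the \emph{algebra} norm, whereas the functions witnessing $x\in\Gamma(A)$ are only normalised in the uniform norm $|\,\cdot\,|_K$, and the powers $f^k$ needed to make $|f^k|_{K\setminus U}$ small can have $\Vert f^k\Vert$ unbounded; so the conclusion $|f(x)|\ge 1-\varepsilon$ simply does not follow for a general Banach function algebra, where $\Vert\cdot\Vert$ and $|\,\cdot\,|_K$ are incomparable (the quasi-analytic algebra $B$ of Example \ref{3.3b} is a case in point). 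When $\Vert\cdot\Vert=|\,\cdot\,|_K$ the argument does work (apply the estimate to $f^k$ itself, whose uniform norm is still $1$), but then you have essentially re-proved the uniform-algebra statement that the paper records in the remark preceding Example \ref{3.3b} and uses for Theorem \ref{3.3a}; the content of Theorem \ref{3.3} is precisely the general case. The escape routes you suggest do not close this: the uniform closure of a natural Banach function algebra need not be natural, its character space and {\v S}ilov boundary can change, and neither the generators of $M_x$ nor the open-mapping constant $C$ transfer to it; and in your alternative variant, a non-zero continuous point derivation at a {\v S}ilov boundary point is not in itself a contradiction (as you note for $R(X)$), so ``must be excluded'' again requires exactly the quantitative input that is missing.

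By contrast, the paper handles the general case with a different device: Gleason's theorem converts finite generation of $M_\varphi$ into a homeomorphism of a neighbourhood $U$ of $\varphi$ onto an analytic variety on which all Gel'fand transforms are holomorphic, and then $\varphi\in\Gamma(A)$ together with the maximum principle for varieties forces $U$ to be compact and finite. Some tool of this strength --- one that sees the algebra-norm behaviour of the functions detecting the {\v S}ilov boundary, not just their sup norms --- is needed beyond the open-mapping estimate. Two smaller points in the same direction: at a strong boundary point of a non-metrizable $\Phi_A$ there need be no single peaking function $h$, so $M_x=\overline{M_x^2}$ requires a peak-set or net argument; and for the appeal to Theorem \ref{1.3} the closure of $M_x^2$ must be taken in $\Vert\cdot\Vert_A$, while your convergence $f(e-h^k)\to f$ is only uniform --- both steps again silently assume the uniform-algebra setting.
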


\begin{proof} Since $M_\varphi$ is a closed ideal in $A$, it follows from  Corollary \ref{1.3b} that $M_\varphi$ is  finitely generated. 

By  a theorem of Gleason (e.g., see \cite[Theorem 15.2]{Stout}),  there is an open neighbourhood $U$ of $\varphi$ in  $\Phi_A$ 
 and a homeomorphism $\gamma$ from $U$ onto an analytic variety   $V$  in a polydisc $\Delta$ in $\C^n$ for some $n\in \N$ such that, for each $a\in A$, 
there is a holomorphic function $F$ on $\Delta$ such  that $\widehat{a} = F\,\circ\,\gamma$ on $U$.

We have $\partial U \subset K\setminus L$, where $\partial U$ denotes the frontier of $U$.  Assume that $\partial U \neq \emptyset$. Then
 there exists $z\in V$ such that $\lv F(z)\rv > \lv F \rv_{\partial V}$  for a holomorphic function $F$ on $\Delta$, a contradiction of the
 maximum principle for holomorphic functions on varieties \cite[III, Theorem 16]{GRo}. Thus $\partial U =\emptyset$ and $U$ is compact.  Hence $V$  is compact. 
But there are no compact, infinite varieties in $\C^{\,n}$, and so $V$  and $U$ are finite. 
 Thus $\varphi$ is isolated in $\Phi_A$. 
\end{proof}\s

\begin{theorem} \label{3.3c}
Let $A$ be a commutative, unital  Banach algebra.  Suppose that $M_\varphi$ is {\CG}
 for each $\varphi \in  \Gamma(A)$.  Then $A$ is finite dimensional.
\end{theorem}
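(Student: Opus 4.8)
The plan is to derive this from Theorems~\ref{3.3} and~\ref{3.2}. First I would observe that, since $M_\varphi$ is {\CG} for every $\varphi\in\Gamma(A)$, Theorem~\ref{3.3} shows that each point of $\Gamma(A)$ is isolated in $\Phi_A$. Thus $\Gamma(A)$ is a closed subset of the compact space $\Phi_A$ all of whose points are isolated, and so $\Gamma(A)$ is finite, say $\Gamma(A)=\{\varphi_1,\dots,\varphi_m\}$; in particular $\Gamma(A)$ is a clopen subset of $\Phi_A$.

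The crucial (and, I expect, hardest) step is to strengthen this to $\Phi_A=\Gamma(A)$. Here I would apply {\v S}ilov's idempotent theorem to the clopen set $\Gamma(A)$ to obtain an idempotent $e\in A$ whose Gel'fand transform $\widehat e$ is the characteristic function of $\Gamma(A)$ on $\Phi_A$. Then $e_A-e$ is an idempotent and $\widehat{e_A-e}$ is the characteristic function of $\Phi_A\setminus\Gamma(A)$, so $\widehat{e_A-e}$ vanishes on $\Gamma(A)$. Since $\Gamma(A)$ is a boundary for $A$, one has $\lv\widehat a\rv_{\Phi_A}=\lv\widehat a\rv_{\Gamma(A)}$ for every $a\in A$; applying this to $a=e_A-e$ gives $\widehat{e_A-e}=0$ on $\Phi_A$, so $e_A-e\in J(A)$. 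But $J(A)$ contains no non-zero idempotent (if $p\in J(A)$ is idempotent, then $e_A-p$ is invertible and $p(e_A-p)=0$, whence $p=0$), and therefore $e=e_A$; comparing characteristic functions, $\Gamma(A)=\Phi_A$.

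It then follows at once that every maximal ideal of $A$ has the form $M_\varphi$ for some $\varphi\in\Phi_A=\Gamma(A)$, and so, by hypothesis, every maximal ideal of $A$ is {\CG}. An appeal to Theorem~\ref{3.2} now yields that $A$ is finite dimensional, completing the proof.

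The main obstacle is the passage from ``$\Gamma(A)$ finite'' to ``$\Gamma(A)=\Phi_A$''. One cannot hope to rule out a character $\psi\in\Phi_A\setminus\Gamma(A)$ by a direct separation argument, because any $a\in A$ whose Gel'fand transform vanishes on $\Gamma(A)$ automatically lies in $J(A)$ and hence satisfies $\widehat a(\psi)=0$ as well; this is exactly why one must bring in {\v S}ilov's idempotent theorem and exploit the fact that the radical contains no non-zero idempotent. The remaining steps are then routine applications of the quoted results.
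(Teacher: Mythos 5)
Your argument is correct and follows the paper's own route: Theorem~\ref{3.3} makes every point of $\Gamma(A)$ isolated in $\Phi_A$, so $\Gamma(A)$ is finite, whence $\Phi_A=\Gamma(A)$ and Theorem~\ref{3.2} applies (the paper leaves the implication ``$\Gamma(A)$ finite $\Rightarrow\Phi_A=\Gamma(A)$'' implicit, which you justify via {\v S}ilov's idempotent theorem). Note only that this step does not \emph{require} the idempotent theorem: if $\psi\in\Phi_A\setminus\{\varphi_1,\dots,\varphi_m\}$, choose $a_i\in A$ with $\widehat{a_i}(\psi)=1$ and $\widehat{a_i}(\varphi_i)=0$, and then $a=a_1\cdots a_m$ satisfies $\lv\widehat a\rv_{\Gamma(A)}=0<1=\lv\widehat a(\psi)\rv$, contradicting the boundary property directly.
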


\begin{proof}  Since $\Gamma (A)$ is compact and each point of  $\Gamma(A)$ is isolated in $\Phi_A$, the set   $\Gamma(A)$ is finite, and hence $\Phi_A = \Gamma(A)$.
By Theorem \ref{3.2}, $A$ is  finite dimensional.  \end{proof}\s

One might suspect that ${\frak M}_\infty(A)\subset \Gamma(A)$ for a {\CBA} $A$,
but the following  examples show that this is not necessarily the case.\s  

\begin{example}\label{3.4}
{\rm  Let $A$ be an infinite-dimensional, commutative, unital  Banach algebra that is local, so that the unique maximal ideal
 in $A$ is $J(A)$. Then $J(A)$  is not finitely generated.   

However an infinite-dimensional radical of a  commutative, unital  Banach algebra can be singly generated. 
For example, let $B$ be  any infinite-dimensional, commutative, unital  Banach algebra, with identity $e$, and let $A$ be a unital subalgebra of $B$. 
Set ${\mathfrak A}  = A\oplus B$, with  $$
\lV (a,b)\rV = \lV a \rV + \lV b \rV\quad(a  \in  A,\, b\in B)
$$
 and product defined by 
$$
(a_1,b_1)(a_2,b_2)= (a_1a_2, a_1b_2+ a_2b_1)\quad (a_1 ,a_2 \in A,\,b_1,b_2 \in B)\,.
$$
Then ${\mathfrak A}$ is a commutative Banach algebra with identity  $(e,0)$. In the case where $A=B$ is semisimple,  $J({\mathfrak A}) =\{0\}\oplus B$, 
which is infinite-dimensional; here the radical $J({\mathfrak A}) $ is generated by $(0,e)$ because $(0,b)= (b,0)(0,e)\,\;(b\in B)$.

Now suppose  that $B=C(\overline{\D})$, the uniform algebra  of all continuous functions on $\overline{\D}$, and that $A= A(\overline{\D})$, 
the disc algebra, consisting of the functions in $C(\overline{\D})$ that are analytic on $\D$.  Then $\Phi_{{\mathfrak A}} = \overline{\D}$ and $\Gamma({\mathfrak A}) =\T$.
We recall that $$\{f\in A: f(0) =0\}= ZA\,,
$$
 where $Z$ is the coordinate functional.  Set
$$
M = \{(f,g) \in  {\mathfrak A} : f(0) =0\}\,.
$$
Then $M$ is a  maximal ideal of ${\mathfrak A}$, and it 
corresponds to a character on ${\mathfrak A}$ which is not in $\Gamma({\mathfrak A})$.

We {\it claim\/} that the ideal $M$ is not finitely generated in ${\mathfrak A}$. Indeed, assume towards a contradiction that 
$M$ is   finitely generated.  Then we can suppose that the generators are $(Zf_1,g_1), \dots ,(Zf_k,g_k)$, where $f_1,\dots,f_k \in A$ and 
$g_1,\dots,g_k \in B$. Thus, for each $g \in B$, there exist  $r_1,\dots,r_k \in A$ and $s_1,\dots,s_k \in B$ such that
\begin{equation}\label{(3.4)}
 (0,g) = \sum_{i=1}^k (r_i,s_i)(Zf_i, g_i)\,.
 \end{equation}
Set  $F= {\rm lin\/}\{g_1,\dots,g_k\}$, a  finite-dimensional subspace of $B$. Since  $r_i-r_i(0)1 \in ZA$, it follows 
from (\ref{(3.4)}) that $g \in \sum_{i=1}^k r_i(0)g_i +ZB$. Thus $B = F + ZB$.  However it is not true that $B/ZB$  is a finite-dimensional space; 
for example, the set  $\{\lv Z\rv^{1/n}+ZB: n\in\N$ is linearly independent  in  $B/ZB$.  This is the required contradiction.

Hence $M \in {\frak M}_\infty({\mathfrak A})$,   but $M \not\in \Gamma({\mathfrak A})$.\qed}
\end{example}\s

We now  present a natural uniform algebra $A$ for which  we have ${\frak M}_\infty(A)\not\subset \Gamma(A)$.  \s

\begin{example}\label{3.5}
 {\rm  Let $A$ be a unital, {\CBA}, and take $\varphi \in \Phi_A$; set $M=M_\varphi$.

Suppose that  $M = \langle f_1,\dots,f_n\rangle$.  Then $\dim (M/M^2 )\leq n$. 
Indeed, for each $f \in M$, there exist $g_1,\dots,g_n$  with $f=\sum_{j=1}^ng_jf_j$; for $j=1,\dots, n$, write $g_j = g_j(\varphi)1 + h_j$, 
 where $h_j \in M$. Then  
$$
f\in \sum_{j=1}^ng_j(\varphi)f_j +M^2\,,
$$
 and so $M/M^2 ={\rm lin\,}\{f_1+M^2, \dots, f_n+M^2\}$.  Thus the space
of point derivations at $\varphi$ is finite-dimensional.

Now let $X= \prod_{n=1}^\infty \Delta_n$, where each $\Delta_n$ is the closed unit disc, and let $A$ be the  tensor product of countably many copies of the disc algebra, 
as in \cite[Theorem 14.3]{Stout}. Then $A$ is semisimple, the character space of $A$ is $X$, and the {\v S}ilov boundary is $\Gamma (A)= \prod_{n=1}^\infty \T_n$,
 where each  $\T_n$ is the unit circle. The character corresponding to evaluation at the point $0=(0,0,\dots)$ is not in $\Gamma (A)$. 
 Let $M$ be the corresponding maximal ideal.  Then each $f\in M$ is an analytic  function in each of the coordinate functionals $Z_1, Z_2, \dots  $,
 and each linear functional $d_n: f\mapsto (\partial f/\partial z_n)(0)$ is a (continuous) point derivation at $0$. Since these linear functionals are
 linearly independent, it is not true that $\dim (M/M^2)$ is finite,  and so $M$ is not finitely generated.\qed}
\end{example}
\medskip

\subsection{Strong boundary points}
Let $A$ be a be natural Banach function  algebra, and now suppose that  the closed ideal $M_\varphi$ is  {\CG} for 
each   $\varphi\in S_0(A)$, rather than for each  $\varphi \in \Gamma(A)$.  Is it still true that  $A$ must be  finite dimensional?   
First we claim that this is true when $A$ is a uniform algebra  (and when the number of isolated points in $\Phi_A$ is countable).

We write $\delta_\varphi$ for the
 characteristic function of the singleton set  $\{\varphi\}$ when $\varphi \in \Phi_A$. By {\v S}ilov's idempotent theorem \cite[Theorem 2.4.33]{D}, 
 $\delta_\varphi\in A$ whenever $\varphi$ is isolated in $\Phi_A$.\s
 
 \begin{theorem} \label{3.3a}
 Let $A$ be a  uniform algebra on $\Phi_A$.  Suppose that $S_0(A)$ is countable and that the maximal ideal   $M_\varphi$ is 
{\CG} for each $\varphi \in  S_0(A)$.  Then $A$ is finite dimensional.
 \end{theorem}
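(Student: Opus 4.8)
The plan is to combine Theorem~\ref{3.3} with the extra structure that the hypothesis forces on $A$, and then to argue inside a concrete model. First, by Corollary~\ref{1.3b} each ideal $M_\varphi$ with $\varphi\in S_0(A)$ is finitely generated, and, since $S_0(A)\subseteq\Gamma(A)$, Theorem~\ref{3.3} then shows that every such $\varphi$ is isolated in $\Phi_A$. Conversely, an isolated point $\psi$ of $\Phi_A$ has $\{\psi\}$ clopen in the compact space $\Phi_A$, so $\delta_\psi\in A$ by {\v S}ilov's idempotent theorem, and $\delta_\psi$ peaks on $\{\psi\}$; hence $\psi\in S_0(A)$. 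Thus $S_0(A)$ is exactly the (at most countable) set of isolated points of $\Phi_A$, it is a boundary for $A$, and $\Gamma(A)=\overline{S_0(A)}$. If $S_0(A)$ is finite, then $\Gamma(A)=S_0(A)$ is a finite boundary, so the restriction map embeds $A$ in the finite-dimensional algebra $C(\Gamma(A))$ and we are done; so from now on assume $S_0(A)=\{\varphi_n:n\in\N\}$ is infinite, and put $e_n=\delta_{\varphi_n}$.

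Since $S_0(A)$ is a boundary, the map $f\mapsto(f(\varphi_n))_n$ is an isometric unital algebra isomorphism of $A$ onto a closed subalgebra $D$ of $\ell^{\,\infty}$, and $D\supseteq c_{\,0}$ because $e_n$ is sent to the $n$-th basis vector. As $c_{\,0}$ is an ideal of $\ell^{\,\infty}$, it is an ideal of $D$; the distance from a bounded sequence to $c_{\,0}$ is its $\limsup$, so the quotient norm on $D/c_{\,0}$ is $\|\overline x\|=\limsup_n|x_n|$, whence $\|\overline x^{\,2}\|=\|\overline x\|^2$ and $D/c_{\,0}$ is a natural uniform algebra. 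A character of $D$ that does not annihilate some $e_n$ is an evaluation $\varepsilon_{\varphi_n}$, so $\Phi_{D/c_{\,0}}$ is precisely the closed set $\Phi_A\setminus S_0(A)$. Moreover every character in $\overline{S_0(A)}\setminus S_0(A)$ evaluates $x\in D$ at a cluster value of the sequence $(x_n)$, and conversely, so $|\overline x|_{\Phi_{D/c_{\,0}}}=\limsup_n|x_n|$ is attained on $\overline{S_0(A)}\setminus S_0(A)$; since an infinite discrete subset of a compact space is not closed, this set is non-empty, and it is a boundary for $D/c_{\,0}$, so $\Gamma(D/c_{\,0})\subseteq\overline{S_0(A)}\setminus S_0(A)$.

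Now fix $\psi\in S_0(D/c_{\,0})$; then $\psi\in\overline{S_0(A)}\setminus S_0(A)$, so $\psi$ is a non-isolated point of $\Phi_A$, and it will suffice to reach a contradiction by showing that $\psi$ is nevertheless a strong boundary point of $A$. Given a neighbourhood $V$ of $\psi$ in $\Phi_A$ and $\varepsilon>0$, choose $\overline g\in D/c_{\,0}$ with $\|\overline g\|\le1$, with $\overline g(\psi)$ close to $1$, and with $|\overline g|<\varepsilon$ off $V$, and lift it to $g\in D=A$. Every cluster value of the subsequence $(g(\varphi_n))_{\varphi_n\notin V}$ is the value of $\overline g$ at some character of $D/c_{\,0}$ lying outside $V$, hence has modulus $<\varepsilon$; so $|g(\varphi_n)|<\varepsilon$ for all but finitely many $n$ with $\varphi_n\notin V$. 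Multiplying $g$ by a high enough power of $u:=1-\sum_n 2^{-n}e_n$ — which is identically $1$ on $\Phi_A\setminus S_0(A)$ and satisfies $0<|u(\varphi_n)|<1$ — leaves $g$ unchanged on $\Phi_A\setminus S_0(A)$ while pushing those finitely many exceptional values below $\varepsilon$; after renormalising we obtain $f\in A$ with $\|f\|\le1$, with $f(\psi)$ close to $1$ and with $|f|<\varepsilon$ off $V$, which is the strong boundary point condition at $\psi$. Hence $\psi\in S_0(A)$, a contradiction, so $S_0(A)$ is finite and $A$ is finite-dimensional. The one genuinely delicate step is this last construction: one must control the lifted function $g$ simultaneously at the infinitely many isolated points of $\Phi_A$ outside $V$ and on the non-discrete remainder $\Phi_A\setminus S_0(A)$, and the device that makes it go through is that a strong-boundary witness in $D/c_{\,0}$ can be chosen small near every character of $D/c_{\,0}$ other than $\psi$, which forces (via cluster values) the values $g(\varphi_n)$ with $\varphi_n\notin V$ to be small except for finitely many, the rest being absorbed by the idempotent factor $u$.
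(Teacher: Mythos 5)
Your argument is correct, but it takes a much heavier route than the paper. Both proofs begin the same way: Corollary \ref{1.3b} plus Theorem \ref{3.3} show that every point of $S_0(A)$ is isolated in $\Phi_A$. At that moment the paper finishes in three lines: writing $S_0(A)=\{\varphi_n:n\in\N\}$, the function $f=1-\sum_{n\ge1}\tfrac1n\,\delta_{\varphi_n}$ lies in $A$ (the series converges uniformly and $A$ is uniformly closed), satisfies $\lv f\rv<1$ on $S_0(A)$ and $f=1$ on $L=\Phi_A\setminus S_0(A)$, so if $L\neq\emptyset$ it would be a peak set disjoint from the boundary $S_0(A)$ --- a contradiction; hence $S_0(A)=\Phi_A$ and Theorem \ref{3.3c} applies. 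You have all the same ingredients in hand (the isolated points, the idempotents $\delta_{\varphi_n}$, an absolutely convergent series of them --- your $u$ is essentially the paper's peak function), but instead you pass to the quotient of $A\cong D\subset\ell^\infty$ by the ideal $c_{\,0}$, identify $D/c_{\,0}$ as a natural uniform algebra on $\Phi_A\setminus S_0(A)$ via the $\limsup$ quotient norm, take a strong boundary point $\psi$ of the quotient, and lift approximate peak functions back to $A$ to contradict $\psi\notin S_0(A)$. This works, and it is genuinely different: it gives extra information (the identification of $\Phi_{D/c_{\,0}}$ with $\Phi_A\setminus S_0(A)$ and the location of a ``boundary-like'' non-isolated point there), but it costs you two imports that should be made explicit. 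First, your final step uses the nontrivial converse that the approximate-peaking condition (for every neighbourhood $V$ and $\varepsilon>0$ there is $f$ with $\lV f\rV\le1$, $\lv f(\psi)\rv>1-\varepsilon$, $\lv f\rv<\varepsilon$ off $V$) actually forces $\psi$ to be a strong boundary point; this is not the definition but a classical theorem for uniform algebras (the Choquet-boundary characterization, \cite[Theorem 4.3.5]{D}, or \cite{Stout}), and it should be cited rather than called ``the strong boundary point condition''. Second, the lifting/renormalisation is terser than it should be: $\limsup_n\lv g(\varphi_n)\rv\le1$ only gives $\lv g(\varphi_n)\rv<1+\eta$ for all but finitely many $n$, so you must kill the finitely many large values with a high power of $u$ and then divide by $1+\eta$ (not by $\lV g\rV_{\Phi_A}$, which could spoil $f(\psi)$); as written ``after renormalising'' hides this, though the fix is routine. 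In short: correct, but the paper's one-function peak-set argument makes the whole quotient construction unnecessary.
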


\begin{proof} We set $S_0(A)= \{\varphi_n : n\in\N\}$.
By  Theorem \ref{3.3}, each point $\varphi \in S_0(A)$ is an isolated point of $\Phi_A$, and so $S_0(A) $ 
is open in $\Phi_A$. 

Assume towards a contradiction that we have $S_0(A)\neq \Phi_A$, and set $L=\Phi_A\setminus S_0(A)$, a non-empty, compact subset of $\Phi_A$.  
 Clearly each isolated point is  a peak point of $\Phi_A$, and so in $S_0(A)$.

Consider the function 
$$
f:= 1 -\sum_{n=1}^\infty \frac{1}{n}\delta_{\varphi_n}\,,
$$
 so that $f \in A$.   At each $\varphi \in S_0(A)$, we have  $\lv f(\varphi)\rv < 1$, and, for each $\varphi \in L$, we have $f(\varphi)=1$. Hence $L$ is a
 peak  set for $A$.    Since  $S_0(A)$ is a boundary  for $A$,    $S_0(A)\cap L\neq \emptyset$, a contradiction. 
 Thus  $S_0(A)=\Phi_A$. 

It follows from Theorem \ref{3.3c} that $A$ is finite  dimensional.\end{proof}\s

We do  not know if the above  theorem holds in the case where  $S_0(A)$ is not necessarily countable.\s

Let $A$ be a uniform algebra, and let $\varphi \in S_0(A)$.  Then it is easy to see that the following are equivalent without resource to Gleason's theorem,
 which was used  in the proof of Theorem  \ref{3.3}: \s

(a) $M_\varphi$ is singly generated; \s

(b)  $M_\varphi$  is {\CG}; \s

(c) $\varphi$ is isolated in $\Phi_A$. \s

\noindent It is sufficient to prove that (b) $\Rightarrow$ (c).  Thus, suppose that   $M_\varphi$  is {\CG}, and assume towards a contradiction that $\varphi$ is not
isolated in $\Phi_A$.  Then it is easy to see that, given any countable  set $S$ in $M_\varphi$,  there exists $p \in C(\Phi_A)$ 
 such that  such that $\lv p \rv_{\Phi_A} < 1$ and  $\lim_{\psi\to \varphi}p(\psi)/g(\psi) = \infty$ for each $g\in \langle S \rangle$.
By \cite[Theorem 20.12]{Stout}, there exists  $f\in A$ with $f(\varphi) =1$ and $\lv  f(\psi)\rv < 1- p(\psi)$ for each $\psi \in \Phi_A$ with $\psi \neq \varphi$.  
But now $1-f \in M_\varphi$, but $1-f\not\in  \langle S \rangle$.

Thus we have a direct proof of Theorem \ref{3.3a} avoiding Gleason's theorem.
 
 Let $\omega_1$ be the first uncountable   ordinal, set $A = C([0,\omega_1])$, and consider the maximal ideal $M_{\omega_1}$. 
Then the above remark shows that  $M_{\omega_1}$ is not {\CG}. This is related to \cite{Z3,Z4}.

We   show finally that the above theorem does not hold  if we replace   `uniform algebra' by `Banach function algebra'.\s 

 \begin{example} \label{3.3b}
{\rm   Since our example is rather long, we divide the construction into a number of steps.\s

(1)  {\it The set $K$.}  Our first step is the construction of a certain compact subset $K$ of the plane.  We start with $\overline{\D}$, 
the closed unit disc.  For each $n\in\N$,   we consider the circle $\Gamma_n$ of radius $1 +1/n$. Then we place $n$ points equally spaced on $\Gamma_n$. 
 The totality of these points is $U$, and  the union of    $U$ with  $\overline{\D}$ and with $\T$ form   the sets $K$ and $L$, respectively. Clearly $K$ and $L$ 
are compact,   $U$ is the set of isolated  points of $K$, $L=\overline{U}$, and the interior of $K$ is the open disc $\D$. \s
 
(2)  {\it The algebra $B$.} We now define a  natural Banach function algebra $B$ of quasi-analytic functions on $\overline{\D}$. For the general theory 
of such Banach function algebras,  see \cite[\S4.4]{D}; the specific example that we require is given in \cite{DM}.   Thus $(B, \norm_B) $ is an algebra of  
infinitely-differentiable functions on  $\overline{\D}$ such that    $\lV g \rV_B< \infty$, where $\norm_B$ is  specified by the formula
$$
 \lV g \rV_B = \sum_{k=0}^\infty \frac{1}{M_k}\lv  g^{(k)}\rv_{\overline{\D}}\quad (g \in B)
 $$ 
 for a suitable sequence $(M_k : k\in \Z^+)$.    The norm is chosen so that $(B, \norm_B)$ is a  natural Banach function on $\overline{\D}$ and such that the algebra $B$ 
is quasi-analytic, in the sense  that $g=0$ whenever $g\in B$ is such that $g^{(j)}(z_0) =0\,\;(j\in \Z^+)$ for any point $z_0 \in  \overline{\D}$.
 [In fact, we can take $M_k= k! \log 2 \cdots \log(k+2)$ for $k \in \N$, with $M_0=1$, for example.]

We note that the uniform closure of the algebra $B$ is $A(\overline{\D})$, the standard disc algebra,   a natural uniform algebra on $\overline{\D}$.\s
 
(3)  {\it The algebra $C$.} Next, we consider the Banach function algebra $C = {\rm Lip\,}L$  of Lipschitz functions on $L$. These Lipschitz algebras
 are also discussed  in  \cite[\S4.4]{D}.   Thus $C$ consists of the continuous functions on $L$ such that  $\norm_C< \infty$, where $\norm_C$ is  specified by the formula
$$ 
 \lV h \rV_C = \lv h \rv_L + \sup\left\{ \frac{\lv f(z)-f(w)\rv}{\lv z-w\rv} : z,w \in L,\,z\neq w \right\}\,.
$$ 
It is shown in  \cite[Theorem 4.4.24]{D}  that $C$ is a natural Banach function algebra on $L$.  Clearly the uniform closure of $C$ is $C(L)$.\s

(4)  {\it The algebra $A$.} We now form a   Banach function algebra $A$ which is the combination of $B$ and $C$. Indeed,
$$
A = \{ f \in C(K): f\mid  \overline{\D} \in B,\, f\mid L \in C\}\,.
$$
The norm $\norm_A$ on $A$ is  specified by
$$
\lV f \rV_A = \max\{\lV f\mid  \overline{\D}\rV_B,\,\lV f\mid L\rV_C\} \quad (f \in A)\,.
$$ 
We see that $(A,\norm_A)$ is a Banach function algebra on $K$.  \s

(5)  {\it The algebra $\frak A$.} We next look at the functions $f\in A$  such that  
\begin{equation}\label{(3.1)}
\lim_{n\to \infty}  \frac{f(x_n)-f(z_0)}{x_n-z_0} =f'(z_0)
\end{equation}
whenever $z_0\in\T$ and $(x_n)$ is a sequence in $U$ with $\lim_{n\to \infty}x_n =z_0$.     These functions $f$ clearly form   a subalgebra, say $\mathfrak A$, of $A$.

We {\it claim\/} that $\mathfrak A$ is a closed subalgebra of $A$. Indeed, take a sequence $(f_k)$  in $\mathfrak A$ such that $f_k\to f $  in $(A, \norm_A)$. 
 To see that $f \in  \mathfrak A$, take $z_0\in\T$ and a sequence  $(x_n)$ in $U$ with $\lim_{n\to \infty}x_n =z_0$.  Fix $\varepsilon >0$.
   Since $\lV f_k\mid \overline{\D} -f\mid \overline{\D}\rV_B \to 0$, there exists $k_1 \in \N$ such that  
$$
\lv f_k'(z_0)- f'(z_0)\rv <\varepsilon\quad(k\geq k_1)\,.
   $$
 Since $\lV f_k\mid L -f\mid L\rV_C\to 0$, there exists $k_2 \in \N$ such that
 $$
 \frac{\lv (f_k-f)(x_n) - (f_k-f)(z_0)\rv}{\lv x_n-z_0\rv}  <\varepsilon\quad(k\geq k_2)\,.
 $$
Set $k_0=\max\{k_1,k_2\}$. Then there exists $n_0 \in \N$ such that 
$$
\lv   \frac{f_{k_0}(x_n)-f_{k_0}(z_0)}{x_n-z_0} - f'_{k_0}(z_0)\rv <  \varepsilon\quad (n\geq n_0)\,.
$$
Hence
$$
\lv   \frac{f(x_n)-f(z_0)}{x_n-z_0} - f'(z_0)\rv < 3\varepsilon\quad (n\geq n_0)\,.
$$
It follows that $\lim_{n\to \infty}   ({f(x_n)-f(z_0))}/(x_n-z_0) =f'(z_0)$, and so $f \in\mathfrak A$, giving the claim.

Since  $\mathfrak A$ contains the restrictions to $K$ of the polynomials, it is clear that  $\mathfrak A$ contains the constants and separates the points of $K$, 
and so $\mathfrak A$ is a Banach function algebra on $K$.    Since $\mathfrak A$ also contains   $\delta_z$ for  each $z \in U$,
 it is easy to see that the uniform closure  of $\mathfrak A$ is $A(K)$,  the algebra of all continuous functions on $K$ that are analytic on ${\rm int} K= \D$.\s

(6)  {\it The naturality of $\mathfrak A$.}  We next {\it claim\/} that $\mathfrak A$ is natural on $K$.  It is  a general result   
that it suffices to prove: (i)   for each $f \in \mathfrak A$ such that ${\bf Z}(f) =\{z\in L : f(z)=0\}$ is void, it follows that $1/f \in \mathfrak A$; 
(ii) the uniform closure $A(K)$ of  $\mathfrak A$ is natural on $K$. (This follows immediately from \cite[Proposition 4.1.5(i)]{D}.) 
Clause (ii) is a standard result of Arens \cite[Theorem 4.3.14]{D}.   For (i), take $f \in \mathfrak A$  with ${\bf Z}(f) =\emptyset$,  and set $g =1/f \in C(K)$.
 Since $B$ and $C$ are each natural, it follows that  $g\mid \overline{\D} \in B$ and $g\mid L \in C$, and hence $g\in A$. It is clear that $g$ 
satisfies equation (\ref{(3.1)}), and so $g\in \mathfrak A$.   Thus $\mathfrak A$ is natural on $K$.\s

(6)  {\it The peak points  of $\mathfrak A$.} 
Certainly each point of $U$ is an isolated peak point for $A$.

We now {\it claim} that  there are no other peak points.  It is enough to show that the point $z=1$ is not a peak point.

Assume towards a contradiction that $f\in \frak A$ and that $f$ peaks at $1$, say with $f(1)=1$. Then $f\mid \overline{\D} =1 +g$ for a certain function $g\in B$. 
 The function $g$ is not zero, and so, since $B$ is a quasi-analytic algebra, there exists  $k\in \N$ such that $g^{(k)} (1)\neq 0$, where the
 derivative is calculated with  respect to $\overline{\D}\/$;  we take $k_0\in \N$ to be the minimum such $k$.
 
 First, suppose that $k_0\geq 2$.  Then there exists $\alpha \in \C\setminus \{0\}$ such that
$$
f(z) =1 + \alpha (z-1)^{k_0} + o(\lv z-1\rv^{k_0})
$$ 
as  $z\to 1$ with $z\in \overline{\D}$.  But, whatever the value of $\alpha$, we can choose a sequence  $(z_n)$  in $\overline{\D}$ with 
$\lim_{n\to\infty} z_n =1$ and $\Re (\alpha (z_n-1)^{k_0})  >0$  for each $n\in\N$.  This implies that  $\lv f(z_n)\rv > 1$ for all sufficiently 
large $n\in\N$,  a contradiction.

 Second, suppose that $k_0= 1$. We must now use points outside  $\overline{\D}$, and so, at this point, we regard $g \in \frak A$ as a function on $K$ with $f=1+g$. 
We know that   there exists $\alpha \in \C\setminus \{0\}$ such that
 $$
f(z) =1 + \alpha (z-1)  + o(\lv z-1\rv )
$$
as  $z\to 1$ with $z\in \overline{\D}$. Now the set $\{z \in  \C : \Re (\alpha (z-1))\geq 0\}$  is  a half-plane with
 $1$ on the boundary line.  In the case where $\alpha\not\in \R^+$, we can choose a sequence $(z_n)$  in $\overline{\D}$ as before to obtain a contradiction.  
Thus we may suppose that  $\alpha \in \R^+$. But now, by the choice of the set $U$,  there is a sequence $(x_n)$  in $U$ with $\Re (\alpha (x_n-1)) >0$  for each $n\in\N$.
 By the construction of our algebra $\frak A$,  we know that 
$$
\lim_{n\to \infty} \Re\left( \frac{f(x_n)-f(1)}{x_n-1} \right)=\alpha > 0\,.
$$
Thus   $\Re f(x_n) > 1$ for all sufficiently large $n\in\N$. It follows that $\lv f(x_n)\rv> 1$ for all sufficiently large $n\in\N$, and this is 
the required contradiction.\s

(7) {\it The conclusion.}  We have shown that $\mathfrak A$ is a natural Banach function algebra on $K$ such that $S_0({\mathfrak A})=U$, 
the countable set of isolated points of $K$.  Let $z \in U$, with corresponding maximal ideal $M_z$. Then $M_z$ is singly generated  by the function 
$1-\delta_z$, and so all maximal ideals corresponding to points of $\Gamma_0({\mathfrak A})$ are singly generated. However, ${\mathfrak A}$ is not
 a finite-dimensional algebra.}\qed
 \end{example}\medskip

\newcommand{\email}{\texttt}

\noindent   H.\ Garth\ Dales, \\
 Department of  Mathematics and Statistics\\
Fylde College\\
University of Lancaster\\
Lancaster LA1 4YF\\
United Kingdom\\ 
\email{g.dales@lancaster.ac.uk}

\medskip

\noindent W.\ \.Zelazko,\\
Mathematical Institute, Polish Academy of Sciences\\ 
\'Sniadeckich 8, P.O. Box 21\\ 
00-956 Warsaw, Poland\\
\email{W.Zelazko@impan.pl}

\end{document}